\begin{document}

\thispagestyle{empty}

\def\theequation{\arabic{section}.\arabic{equation}}

\newcommand{\codim}{\mbox{{\rm codim}$\,$}}
\newcommand{\stab}{\mbox{{\rm stab}$\,$}}
\newcommand{\lr}{\mbox{$\longrightarrow$}}

\newcommand{\be}{\begin{equation}}
\newcommand{\ee}{\end{equation}}

\newtheorem{guess}{Theorem}[section]
\newcommand{\bth}{\begin{guess}$\!\!\!${\bf }~}
\newcommand{\eeth}{\end{guess}}
\renewcommand{\bar}{\overline}
\newtheorem{propo}[guess]{Proposition}
\newcommand{\bpropo}{\begin{propo}$\!\!\!${\bf }~}
\newcommand{\epropo}{\end{propo}}

\newtheorem{lema}[guess]{Lemma}
\newcommand{\blem}{\begin{lema}$\!\!\!${\bf }~}
\newcommand{\elem}{\end{lema}}

\newtheorem{defe}[guess]{Definition}
\newcommand{\bdefe}{\begin{defe}$\!\!\!${\bf }~}
\newcommand{\edefe}{\end{defe}}

\newtheorem{coro}[guess]{Corollary}
\newcommand{\bcor}{\begin{coro}$\!\!\!${\bf }~}
\newcommand{\ecor}{\end{coro}}

\newtheorem{rema}[guess]{Remark}
\newcommand{\brem}{\begin{rema}$\!\!\!${\bf }~\rm}
\newcommand{\erem}{\end{rema}}

\newtheorem{exam}[guess]{Example}
\newcommand{\beg}{\begin{exam}$\!\!\!${\bf }~\rm}
\newcommand{\eeg}{\end{exam}}

\newtheorem{notn}[guess]{Notation}
\newcommand{\bnot}{\begin{notn}$\!\!\!${\bf }~\rm}
\newcommand{\enot}{\end{notn}}
\newcommand{\cw} {{\mathcal W}}
\newcommand{\ct} {{\mathcal T}}
\newcommand{\ch}{{\mathcal H}}
\newcommand{\cf}{{\mathcal F}}
\newcommand{\cd}{{\mathcal D}}
\newcommand{\cR}{{\mathcal R}}
\newcommand{\cv}{{\mathcal V}}
\newcommand{\cn}{{\mathcal N}}
\newcommand{\lra}{\longrightarrow}
\newcommand{\ra}{\rightarrow}
\newcommand{\blr}{\Big \longrightarrow}
\newcommand{\da}{\Big \downarrow}
\newcommand{\ua}{\Big \uparrow}
\newcommand{\hra}{\mbox{{$\hookrightarrow$}}}
\newcommand{\rt}{\mbox{\Large{$\rightarrowtail$}}}
\newcommand{\dua}{\begin{array}[t]{c}
\Big\uparrow \\ [-4mm]
\scriptscriptstyle \wedge \end{array}}
\newcommand{\ctext}[1]{\makebox(0,0){#1}}
\setlength{\unitlength}{0.1mm}
\newcommand{\cm}{{\mathcal M}}
\newcommand{\cl}{{\mathcal L}}
\newcommand{\cp}{{\mathcal P}}
\newcommand{\ci}{{\mathcal I}}
\newcommand{\bz}{\mathbb{Z}}
\newcommand{\cs}{{\mathcal s}}
\newcommand{\ce}{{\mathcal E}}
\newcommand{\ck}{{\mathcal K}}
\newcommand{\cz}{{\mathcal Z}}
\newcommand{\cg}{{\mathcal G}}
\newcommand{\cj}{{\mathcal J}}
\newcommand{\cc}{{\mathcal C}}
\newcommand{\ca}{{\mathcal A}}
\newcommand{\cb}{{\mathcal B}}
\newcommand{\cx}{{\mathcal X}}
\newcommand{\co}{{\mathcal O}}
\newcommand{\bq}{\mathbb{Q}}
\newcommand{\bt}{\mathbb{T}}
\newcommand{\bh}{\mathbb{H}}
\newcommand{\br}{\mathbb{R}}
\newcommand{\bl}{\mathbf{L}}
\newcommand{\wt}{\widetilde}
\newcommand{\im}{{\rm Im}\,}
\newcommand{\bc}{\mathbb{C}}
\newcommand{\bp}{\mathbb{P}}
\newcommand{\ba}{\mathbb{A}}
\newcommand{\spin}{{\rm Spin}\,}
\newcommand{\ds}{\displaystyle}
\newcommand{\tor}{{\rm Tor}\,}
\newcommand{\bff}{{\bf F}}
\newcommand{\bs}{\mathbb{S}}
\def\ns{\mathop{\lr}}
\def\nssup{\mathop{\lr\,sup}}
\def\nsinf{\mathop{\lr\,inf}}
\renewcommand{\phi}{\varphi}
\newcommand{\tT}{{\widetilde{T}}}
\newcommand{\tG}{{\widetilde{G}}}
\newcommand{\tB}{{\widetilde{B}}}
\newcommand{\tC}{{\widetilde{C}}}
\newcommand{\tW}{{\widetilde{W}}}
\newcommand{\tphi}{{\widetilde{\Phi}}}
\newcommand{\fp}{{\mathfrak{p}}}

\title[Even-dimensional complex quadrics]{Equivariant $K$-theory of even-dimensional complex quadrics }

\author[B. Paul]{Bidhan Paul}
\address{Department of Mathematics, Indian Institute of Technology, Madras, Chennai 600036, India}
\email{ bidhanam95@gmail.com; bidhan@smail.iitm.ac.in}

\subjclass{55N15, 14M15, 19L99}

\keywords{Complex Quadrics, GKM manifold, GKM graph, Equivariant $K$- theory}

\begin{abstract}
  The aim of this paper is to describe the torus equivariant
  $K$-ring of even-dimensional complex quadrics by studying the graph equivariant $K$-theory of their corresponding GKM graphs.  This involves providing a  presentation for its graph equivariant $K$- ring in terms of generators and relations. This parallels the description of the equivariant cohomology ring  of even-dimensional complex quadrics due to Kuroki.
\end{abstract}

\maketitle

\section{Introduction}

A GKM manifold is an \emph{equivariantly formal} manifold $M^{2k}$ endowed with an action of a compact torus $T := (S^1)^n$. This action satisfies the condition that the set of $0$ and $1$-dimensional orbits have the structure of a graph. This class of objects was initially introduced by Goresky, Kottwitz, and MacPherson in \cite{gkm} as a class of algebraic varieties (GKM derives from their initials). 

Inspired by their work, Guillemin and Zara \cite{gz01} introduced the concept of an \emph{abstract GKM graph} as an abstract graph with edges labeled by vectors in the dual of Lie algebra of $T$. In particular, an abstract GKM graph is a combinatorial counterpart of GKM manifolds and helps to study various topological and geometric properties of GKM manifolds (see e.g. \cite{gz01}, \cite{ghz06}, \cite{gkz}, \cite{ku16}) using combinatorial properties of GKM graph. On the other hand, abstract GKM graphs themselves have gathered attention beyond their geometric motivations (see e.g. \cite{fy19, fim14, k19, ku21, mmp07, y21}). These concepts gave rise to a rich field in mathematics commonly referred to as \emph{GKM theory}. This theory has been explored from both geometric and combinatorial perspectives and has found applications in various areas, such as representation theory (e.g \cite{fiebig}).

In \cite{gkm}, Goresky, Kottwitz, and MacPherson showed  that the equivariant cohomology ring $H_T(M)$ of a GKM pair $(M,~T)$ can be computed by studying the graph equivariant cohomology of the associated GKM graph $(\Gamma,\alpha)$. Later,  Knutson and Rosu proved  the much harder part \cite[Theorem A.4, Corollary A.5]{kr}  that the results remain true for the equivariant $K$- theory ring $K_T(M)$ as well (see, \eqref{gkmkth}). It is noteworthy that although Knutson and Rosu primarily focused on equivariant $K$-theory with complex coefficients in \cite{kr}, the validity of \eqref{gkmkth} persists with integer coefficients due to the works of Vezzosi and Vistoli \cite[{Corollary 5.11}]{vv}, which is an algebraic analogue of \cite[Theorem A.4]{kr} (this fact is also remarked in \cite[p.447]{kr}).

A complex quadric in dimension $2n$ is defined to be the zero locus of a non-degenerate quadratic form in $2n+2$ complex variables and  can be represented as 
\[Q_{2n}:=\Big\{[x_1:\cdots:x_{2n+2}]\in\mathbb{CP}^{2n+1}~|~\sum_{j=1}^{n+1}x_ix_{2n+3-i}=0\Big\}.\] 
There exists the  natural $T^{n+1}$- action on $Q_{2n}$
\begin{equation}\label{introtorusaction}
	[x_1:\cdots:x_{2n+2}]\cdot(t_1,\ldots,t_{n+1}):=[x_1t_1:\cdots:x_{n+1}t_{n+1}:t_{n+1}^{-1}x_{n+2}:\cdots:t_1^{-1}x_{2n+2}].
\end{equation}
where $(t_1,\ldots,t_{n+1})\in T^{n+1}$. Since, $Q_{2n}$ is diffeomorphic with the Grassmannian $\br_{2,2n}\cong SO(2n+2)/(SO(2n)\times SO(2))$ of oriented 2-planes through the origin in $\br^{2n+2}$, the above action is equivalent to a maximal torus $T^{n+1}$- action  which is induced by restricting the transitive $SO(2n+2)$- action. Further, since $SO(2n)\times SO(2)\subset SO(2n+2)$ is a subgroup of maximal rank (i.e $T^{n+1}$ is also a maximal torus of $SO(2n+2)$), the set of $0$ and $1$-dimensional orbits of $T^{n+1}$- action have the structure of a graph (see, \cite{ghz06}). Therefore, the GKM graph of $Q_{2n}$ with $T^{n+1}$- action \eqref{introtorusaction} can be determined by labeling the edges with tangential representations. Note that, the torus action in \eqref{introtorusaction} is not effective as it has a non-trivial kernel $\mathbb{Z}_2=\{\pm1\}$. Alternatively, one can consider the $T^{n+1}/\mathbb{Z}_2~(\cong T^{n+1})$-action on $Q_{2n}$, which is effective (see, Section \ref{edcqigg}). 

In \cite{lai72, lai74}, H. Lai  studied the  ordinary  cohomology ring (over $\bz$) and  homotopy groups  of even-dimensional complex quadrics $Q_{2n}$  explicitly. Since $H^{odd}(Q_{2n})=0$ by \cite[Theorem 3.2]{lai74}, $Q_{2n}$ is an equivariantly formal GKM manifold (see, \cite{gkm}). Therefore, the equivariant $K$-theory $K_{T^{n+1}}(Q_{2n})$ (resp. equivariant cohomology $H_{T^{n+1}}(Q_{2n})$) of the effective $T^{n+1}$ -action can be determined from the graph equivariant $K$- theory (resp. graph equivariant cohomology) of its GKM graph .

Recently, in \cite{ku23}, S. Kuroki   studied the GKM graph  of $Q_{2n}$ under the effective $T^{n+1}$- action and  computed the graph equivariant cohomology of their GKM graphs.  In particular, the author has provided a ring structure for the equivariant cohomology, presenting it in the form of generators and relations.

Our main aim in this paper is to determine the equivariant $K$- theory of the effective $T^{n+1}$- action on $Q_{2n}$ by explicitly  describing its generators and relations  in terms of the GKM graphs. This parallels the description of the equivariant cohomology ring  of even-dimensional complex quadrics due to Kuroki \cite{ku23}.

{\bf This article is structured as follows :}
In Section \ref{preli}, we provide an overview of fundamental concepts related to GKM manifolds, integral GKM graphs, and their $K$-ring. Moving on to Section \ref{edcqigg}, we define even-dimensional complex quadrics $Q_{2n}$ and present the associated GKM graph $\mathscr{Q}_{2n}$ arising from the effective $T^{n+1}$-action.

In Section \ref{grkring}, we introduce the generators $M_v$ and $\Delta_P$ of $K(\mathscr{Q}_{2n})$ and establish four types of relations among them. Additionally, we explore various properties of $M_v$ and $\Delta_P$ in this section. Subsequently, in Section \ref{secmainth}, we provide a proof for the main theorem of this paper (Theorem \ref{mainth}).

\section{Preliminaries}\label{preli}
In this section, we review some basic facts of GKM manifolds (see, \cite{k09}), integral GKM graphs (see, \cite{gz01}), and the $K$-ring of integral GKM graphs (see, \cite{gsz13}).

Let $M$ be a compact manifold of real dimension $2k$. Let $T$ be a compact $n$-dimensional torus, $\mathfrak{t}$ be its Lie algebra ($\mathfrak{t}^*$ be the dual of $\mathfrak{t}$), which acts effectively on $M$. 	A pair $(M,T)$ 	is said to be a \emph{GKM manifold} if the following conditions are satisfied :
	\begin{itemize}
		\item the fixed point set $M^T$ is finite.
		\item $M$ possess a $T$- invariant almost complex structure.
		\item for each element $p\in M^T$, the weights $\{\alpha_{i,p}\in\mathfrak{t}^*\,:\, 1\leq i\leq k\}$ of the isotropy representation $T_pM$ of $T$ are pairwise linearly independent.
	\end{itemize}

\subsection{An integral GKM graph and its $K$- ring}
Let $\Gamma=(\cv,~\ce)$ be a  regular graph of degree $d$, where $\cv$ and $\ce$ are the set of vertices and the set of oriented edges respectively.
For an oriented edge $e\in\mathcal{E}$, we define $i(e)$ (resp,  $t(e)$) as the {\it initial} (resp, {\it terminal})  vertex of $e$. Moreover, we let $\overline{e}$ to be the edge with  direction opposite to $e$. Let $\ce_p\subset \ce$ be the set of edges whose initial vertex is $p$.

\begin{defe}[Connection]\label{connection}
	Let $e =(pq)\in\ce$ be an oriented edge. A bijection $\nabla_e: \ce_p\to \ce_q$ with ${\nabla_e}(e)=\overline{e}$ is said to be a	connection along $e$. Furthermore, we define a \emph{connection} on graph $\Gamma$ to be a collection $\nabla:=\{\nabla_e\}_{e\in \ce}$ satisfying $\nabla_e^{-1}=\nabla_{\overline{e}}$ for every $e\in \ce$.
\end{defe}
Let $(\mathfrak{t}_{\bz})^*$ be the weight lattice of torus $T$.
\begin{defe}[Integral axial function]\label{intaxialfn}
	Let $\nabla$ be a connection on graph $\Gamma$.  A map $\alpha: \ce\to (\mathfrak{t}_\bz)^*$  is called a \emph{$\nabla$-compatible integral axial function} on $\Gamma$ if it satisfies the following conditions :
	\begin{enumerate}
		\item $\alpha(\overline{e})=-\alpha(e)$ for all $e\in\ce$.
		\item the vectors $\{\alpha(e)\,:\,e\in \ce_p\}$ are pairwise linearly independent for all $p\in \cv$.
		\item for every edge $e=(pq)$ and every $e'\in \ce_p$, there exists $m(e,e')\in\bz$ depending on $e,\, e'$ such that \[\alpha(\nabla_e(e'))-\alpha(e')=m(e,e')\alpha(e).\]
	\end{enumerate}
	An \emph{integral axial function} on $\Gamma$ is a  $\nabla$-compatible integral axial function for some connection $\nabla$ on $\Gamma$.
\end{defe}
\begin{defe}[Integral GKM graph]
	An integral GKM graph is a pair $(\Gamma,\alpha,\nabla)$ or $(\Gamma,\alpha)$ (if $\nabla$ is obviously determined) which consists of a graph $\Gamma=(\cv,\ce)$ and an integral axial function $\alpha:\ce\to(\mathfrak{t}_\bz)^*$.
\end{defe}
\begin{rema}\label{uniqueconn}
For an integral GKM graph $(\Gamma, \alpha, \nabla)$, if every three-tuple of elements in $\alpha(\ce_p)$ is linearly independent for all vertices $p\in\cv$, then $\nabla$ is uniquely determined {(see, \cite[p.453]{mmp07}, \cite[Proposition 2.1.3]{gz01})}. 
\end{rema}
 Let  $R(T)$ be the representation ring of $T$ which can be identified with the ring of finite sums
\[\sum_{k=1}^lm_ke^{2\pi\sqrt{-1}\alpha_k}\] for $m_k\in\bz$ and $\alpha_k\in(\mathfrak{t}_\bz)^*$. Let $Maps(\cv,\, R(T))$ be the ring of maps from the set of vertices to the representation ring of $T$. 

\begin{defe}[$K$- ring of an integral GKM graph]\label{kclass}
	 An element $f\in  Maps(\cv,\, R(T))$ is said to be a {\it $K$- class} if for every edge $e=(x y)\in \ce$, 
	\begin{equation}\label{kclasscond}
		f(x)-f(y)=\beta(1-e^{2\pi\sqrt{-1}\alpha(e)})
	\end{equation}
	for some $\beta\in R(T)$. 
	Observe that the collection of all $K$-classes forms a subring of $Maps(\cv,\, R(T))$, known as the {\bf $K$- ring} of $(\Gamma,\alpha)$ and denoted by $K_{\alpha}(\Gamma)$ (see, \cite{gsz13} for further details).
\end{defe}
Note that $K_{\alpha}(\Gamma)$ has a $R(T)$- module structure which is induced by the  injective homomorphism $i: R(T)\to K_{\alpha}(\Gamma)$, where the image of $x\in R(T)$, denoted as $i(x)$, is defined by
\begin{equation}\label{algebramap}
	i(x)(v)=x ~~~~\text{for all $v\in\cv$.}\end{equation}

\subsection{$K$-ring of GKM graphs induced from GKM manifolds}

	For a given GKM manifold $(M,T)$, one can define a graph $\Gamma_M :=(\cv, \ce)$  with vertices $\cv=M^T$. For $p,\,q\in M^T$, there is an oriented edge $e= (pq)\in \ce$ if and only if there exists a $T$-invariant embedded 2-sphere $S^2_e$ containing $p$ and $q$ as north and south poles respectively. The edges are labeled by a function $\alpha_M:\ce\to (\mathfrak{t}_\bz)^*$, known as \emph{axial function}, which takes an edge $e= (pq)\in\ce$  to the weight of the isotropy representation  $T_pS^2_e$ of $T$. 
	
	Further, since $(M,T)$ is a GKM manifold, a \emph{connection}  $\nabla$ can be canonically defined (see, Remark \ref{uniqueconn}) such that the above $\alpha_M:\ce\to (\mathfrak{t}_\bz)^*$ is an integral axial function. In particular, the pair $(\Gamma_M,\alpha_M)$ is an integral GKM graph associated to the GKM manifold $(M,T)$ (see, \cite[Section 1.1]{gz01} for further details).

\begin{rema}
Knutson and Rosu \cite[Theorem A.4, Colollary A.5]{kr} proved that, for a GKM graph $(\Gamma_M,\alpha_M)$ associated to the GKM pair  $(M,T)$, the following isomorphisim holds 
	\begin{equation}\label{gkmkth}
		K_T(M)\simeq K_{\alpha_M}(\Gamma_M),
	\end{equation} 
	where $K_T(M)$ is the $T$- equivariant $K$- ring of $M$.
	
Knutson and Rosu have studied equivariant $K$- theory with complex coeffecients in \cite{kr}. However, \eqref{gkmkth} remains true over $\bz$ as well due to the works of Vezzosi and Vistoli	\cite[Corollary 5.11]{vv}, which is an algebraic analogue of \cite[Theorem A.4]{kr} (this fact is also remarked in \cite[p.447]{kr}). 
\end{rema}

Here, we provide an overview of why the GKM pair $(M,T)$ fits into the algebro-geometric framework and satisfies the conditions of \cite[Corollary 5.11]{vv} (i.e., why the $T$-action admits sufficiently many limits). This reasoning shows that \cite[Corollary 5.11]{vv} can be applied to deduce the isomorphism between $K_T(M)$ and the $K$-theory ring of the GKM graph associated with $(M, T)$, with integer coefficients.

\begin{rema}
	For $1\leq i\leq n$, consider a 1-parameter subgroup $\lambda_i : \mathbb{G}_m =\bc\setminus\{0\} \to T$ defined by
\[t\mapsto \mbox{diag}(1,\ldots,1,t,1\ldots,1),\]
	where $t$ appears in the $(i,i)$-th position. Note that the closed points of $M$ are precisely the $T$-fixed points of $M$. Therefore, each $\lambda_i$ admits limits (see \cite[Definition 5.8]{vv}). Further, since $\{\lambda_i : i = 1, \ldots, n\}$ generate the group of 1-parameter subgroups, it follows that the $T$-action on $M$ admits enough limits.
  
\end{rema}

\section{Even dimensional complex quadrics and its GKM graph}\label{edcqigg}
In this section we define even dimensional complex quadrics and briefly recall their GKM graphs from \cite{ku23}. We broadly follow the notation and conventions from that article.

\begin{defe}\label{compquaeven}
	An even dimensional complex quadrics is defined to be the following manifold \[Q_{2n}:=\Big\{[x_1:\cdots:x_{2n+2}]\in\mathbb{CP}^{2n+1}~|~\sum_{j=1}^{n+1}x_ix_{2n+3-i}=0\Big\}.\]
\end{defe}

 There exists a natural action of $T^{n+1}$ on $Q_{2n}$ : \[Q_{2n}\times T^{n+1} \longrightarrow Q_{2n} \]
   \begin{equation}\label{torusaction}
  	[x_1:\cdots:x_{2n+2}]\cdot(t_1,\ldots,t_{n+1}):=[x_1t_1:\cdots:x_{n+1}t_{n+1}:t_{n+1}^{-1}x_{n+2}:\cdots:t_1^{-1}x_{2n+2}].
  \end{equation}
  Note that, $Q_{2n}\cong SO(2n+2)/(SO(2n)\times SO(2))$. Hence the above action is equivalent to a maximal torus $T^{n+1}$- action (of $SO(2n+2)$),  induced by restricting  the
  	$SO(2n+2)$- action  by left multiplication. 
   	 Additionally, $T^{n+1}$ is also a maximal torus of $SO(2n)\times SO(2)$. Therefore from \cite{ghz06}, the fixed points and one-dimensional orbits under  $T^{n+1}$- action have the structure of a graph.   
  
Since, the above action \eqref{torusaction} is not effective due to its non-trivial kernel $\bz_2=\{\pm1\}$, {we consider $T^{n+1}/\bz_2~(\simeq T^{n+1})$- action on $Q_{2n}$.} In particular, we obtain the GKM graph of $Q_{2n}$ (denoted by $\mathscr{Q}_{2n})$ by labeling edges with tangential representations on (effective) $T^{n+1}$-fixed points.

\begin{rema}
Throughout the paper, we consider the cohomology ring \begin{equation}H^*(BT^{n+1})\cong \bz[x_1,\ldots,x_{n+1}].\end{equation} The generators $x_k$, for $k=1, \ldots, n+1$, has degree 2 and can be regarded as the $k$-th projection $\rho_k:T^{n+1}\to S^1$. In particular, we commonly use the following equivalences :
\[H^2(BT^{n+1})\cong (\mathfrak{t}^{n+1}_\bz)^*\cong \mbox{Hom}(T^{n+1}, S^1)\cong \bz^{n+1}.\]
Further, we  consider the representation ring of $T^{n+1}$ as the following Laurent polynomial ring 
    \begin{equation}\label{oldrepring}
	R(T^{n+1})\cong \bz[y_1^{\pm1},\ldots,y_{n+1}^{\pm1}],
	\end{equation} 
generated by 1-dimensional representations $y_k=e^{2\pi\sqrt{-1}x_k}, ~y_k^{-1}=e^{-2\pi\sqrt{-1}x_k}$ for all $k=1,\ldots,n+1$. For further details, interested readers are referred to \cite[Section 4.3]{athir}.

\end{rema}

  \subsection{The GKM graph of the  $T^{n+1}$-action on $Q_{2n}$} We first recall the GKM graph of the non-effective  $T^{n+1}$ action \eqref{torusaction} on $Q_{2n}$. Note that the graph for the effective $T^{n+1}$-action remains the same, with a modified axial function (see. \eqref{newaxial}). 
 
  One can check that the fixed points of $Q_{2n}$ are \[Q_{2n}^T=\{[e_i]\,:\,1\leq i\leq 2n+2\},\] where $[e_i]=[0:\cdots:0:1:0:\cdots:0]\in \mathbb{CP}^{2n+1}$ which has 1 in the $i$-th place and 0 elsewhere. Furthermore, the invariant two spheres of $Q_{2n} $ are denoted by the following symbol :
\begin{equation}
	[w_i:w_j]:= [0:\cdots:0:w_i:0:\cdots:0:w_j:0:\cdots:0]
\end{equation}
where $i+j\neq 2n+3$. Note that $[w_i:w_j]$, where $i+j= 2n+3$, is one of the fixed points $[e_i]$ or, $[e_j]$ (see Definition \ref{compquaeven}).

Therefore, we associate the following graph $\Gamma_{2n} := (\mathcal{V}_{2n},~ \mathcal{E}_{2n})$ with the pair $(Q_{2n}, T^{n+1})$ where the vertices are $Q_{2n}^T$ i.e \[\mathcal{V}_{2n}=[2n+2]:=\{1,2,\ldots,2n+2\},\] 
and, $[e_i], [e_j]\in Q_{2n}^T$ are connected by an edge if and only if there is an $T^{n+1}$-invariant embedded 2-sphere  containing $[e_i], [e_j]$ as its poles. Hence the set of edges of $\Gamma_{2n}$ is given by \[\mathcal{E}_{2n}:=\{ij\,:\,i,j\in[2n+2]\text{ such that } i\neq j,\,i+j\neq 2n+3\}.\]

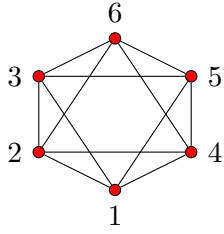
\begin{figure}[H]
	\centering
\begin{tikzpicture}[row sep=small, column sep=small]
 \node[draw, circle, fill=red, inner sep=1.5pt, label=right:5] (A) at (1, 0.5) {};
	\node[draw, circle, fill=red, inner sep=1.5pt, label=right:4] (B) at (1,-0.5) {};
	\node[draw, circle, fill=red, inner sep=1.5pt, label=left:3] (C) at (-1, 0.5) {};
    \node[draw, circle, fill=red, inner sep=1.5pt, label=left:2] (D) at (-1, -0.5) {};
	\node[draw, circle, fill=red, inner sep=1.5pt, label=6] (E) at (0, 1) {};	
	\node[draw, circle, fill=red, inner sep=1.5pt, label=below:1] (F) at (0, -1) {};		
	\draw (A) -- node[midway, below, sloped]  {} (B);
	\draw (B) -- node[midway, below right, sloped] {} (D);
	\draw (C) -- node[midway, below left, sloped] {} (A);
	\draw (A) -- node[midway, below, sloped]  {} (E);
	\draw (A) -- node[midway, below, sloped]  {} (F);
	\draw (B) -- node[midway, below, sloped]  {} (F);
	\draw (B) -- node[midway, below, sloped]  {} (E);
	\draw (F) -- node[midway, below, sloped]  {} (D);
		\draw (F) -- node[midway, below, sloped]  {} (C);
			\draw (D) -- node[midway, below, sloped]  {} (C);
				\draw (C) -- node[midway, below, sloped]  {} (E);
	\draw (D) -- node[midway, below, sloped]  {} (E);
\end{tikzpicture}

  \caption{The above graph is $\Gamma_4$ for $n=2$ induced from the $T^3$ -action.}
\label{fig:gamma4diag}
\end{figure}

\begin{rema}
For simplicity, we shall write $j$ as $\overline{i}$ when $i+j=2n+3$. Hence the set of vertices and edges in $\Gamma_{2n}$ can be re-written as 
\begin{equation}
\mathcal{V}_{2n}=\{1,2,\ldots,n+1,\overline{n+1},\overline{n+2},\ldots, \overline{1}\}. 
	\end{equation}
\begin{equation}
\mathcal{E}_{2n}:=\{ij\,:\,i,j\in\mathcal{V}_{2n}\text{ such that } j\neq i,\overline{i}\}.
	\end{equation}	\end{rema}
In order to define an axial function $\widehat{\alpha}  :\ce_{2n}\to H^2(BT^{n+1})$, it is enough to compute the tangential representation of each invariant 2-spheres $[w_i:w_j]\in Q_{2n}$. Note that, an element $t=(t_1,\ldots,t_{n+1})\in T^{n+1}$ acts on  $[w_i:w_j]$ by the following : \[ [w_i:w_j]\mapsto[\rho_i(t)w_i:\rho_j(t)w_j],\]
where $\rho_i: T^{n+1}\to S^1$ is the surjective homomorphism defined by 
\[\rho_i(t)=\begin{cases}
	t_i~~~~~~~&\text{if $i\in\{1,\ldots,n+1\}$},\\
	t_{\overline{i}}^{-1} ~~~~~~~&\text{if $i\in\{n+2,\ldots,2n+2\}$}.
\end{cases}\]
Therefore,  the tangential representations around the fixed points of $[w_i:w_j]$  are given by
\[[1:w_j]\mapsto [1:\rho_i(t)^{-1}\rho_j(t)w_j],~~~~[w_i:1]\mapsto [\rho_j(t)^{-1}\rho_i(t)w_1:1].\]
Consequently, we define the axial function $\widehat{\alpha} : \ce_{2n}\to H^2(BT^{n+1})$  by the equation \begin{equation}\widehat{\alpha}(ij)=x_j-x_i,\end{equation} where $x_i\in  H^2(BT^{n+1})$ corresponds to the $i$-th coordinate projection $\rho_i : T^{n+1}\to \bc\setminus \{0\}$ and satisfies the following :
\begin{itemize}
	\item $\{x_i : i\in [n+1]\}$ generates $H^2(BT^{n+1})$.
	\item for $i\in\{n+2,\ldots, 2n+2\}$, $x_i:=-x_{\overline{i}}$.
\end{itemize}

However, since the $T^{n+1}$ action on $Q_{2n}$ in \eqref{torusaction} is not effective, it can be verified that for any vertex $i_0\in\mathcal{V}_{2n}$, the collection \begin{equation}\label{axialvalues}
	\{\widehat{\alpha}(i_0j)~|~i_0+j\not=2n+3\}
	\end{equation} 
does not span $H^2(BT^{n+1})$. In particular, $\widehat{\alpha}$ is not an \emph{effective} axial function (see, \cite[Section 2.1]{k19}).

For example, if we consider $1\in\cv_{2n}$, the vectors in \eqref{axialvalues} i.e
\begin{equation}\label{axialvalues1}
	x_2-x_1,\ldots,x_{n+1}-x_1,-x_{n+1}-x_1,\ldots,-x_2-x_1
\end{equation}
span the lattice $\langle 	x_2-x_1,\ldots,x_{n+1}-x_1,-x_{n+1}-x_1\rangle_\bz$, which is a proper subspace of $H^2(BT^{n+1})$. In order to  avoid this problem, we will identify $\langle 	x_2-x_1,\ldots,x_{n+1}-x_1,-x_{n+1}-x_1\rangle_\bz$ with  $H^2(BT^{n+1})$. This defines the axial function induced by the effective $T^{n+1}$- action on $Q_{2n}$.
In particular, we replace \begin{itemize}
	\item $x_i-x_1$ by $x_{i-1}$ for all $i=2,\ldots,n+1$.
	\item $-x_{n+1}-x_1$ by $x_{n+1}$.
\end{itemize}
Therefore, we can rewrite the elements in \eqref{axialvalues1} as the following elements respectively
\begin{equation}\label{newaxialvalues1}
	x_1,\ldots,x_n,x_{n+1},-x_{n-1}+x_n+x_{n+1},\ldots,-x_1+x_n+x_{n+1},
\end{equation}
which spans $H^2(BT^{n+1})$. {The similar procedure applies to the other vertices as well.}
\begin{rema}
	Note that the above identification depends on a choice of the ordering of the vertices of the GKM graph.
\end{rema}
\begin{defe}\label{defh}
Set $h : \cv_{2n}\to H^2(BT^{n+1})$ as 
\begin{equation}\label{defhj}
	h(j)=\begin{cases}
	x_{j-1}-x_{n+1} \text{ if } j=1,\ldots,n+2,\\
	x_{n}-x_{2n+2-j} \text{ if } j=n+3,\ldots,2n+2,
\end{cases}\end{equation} where $x_0=0$.
And, we define  the axial function  $\alpha : \ce_{2n}\to  H^2(BT^{n+1})$ by \begin{equation}\label{newaxial}
	\alpha(ij)=h(j)-h(i)\text{~~~~ for $j\neq \{i,\overline{i}\}$.}  \end{equation}
\end{defe}
From the above definition, the following holds,
\begin{lema}{\cite[Lemma 2.6]{ku23}}
For every vertex $i\in\cv_{2n}$, and for any distinct three vertices $k_1,k_2,k_3\in\cv_{2n}\setminus\{i,\overline{i}\}$, the axial functions $\alpha(ik_1),~\alpha(ik_2),~\alpha(ik_3)$ are linearly independent. In other words, the GKM graph $(\Gamma_{2n}, \alpha)$ is three-independent.
\end{lema}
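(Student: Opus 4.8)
The plan is to recast the asserted linear independence as an \emph{affine} independence statement, and then to reduce---via the antipodal symmetry $j\mapsto\overline{j}$---to the transparent configuration of weights $\pm x_1,\dots,\pm x_{n+1}$ that already underlies the non-effective axial function $\widehat{\alpha}$.

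First I would observe that, since $\alpha(ik_m)=h(k_m)-h(i)$, the three vectors $\alpha(ik_1),\alpha(ik_2),\alpha(ik_3)$ are linearly independent if and only if the four points $h(i),h(k_1),h(k_2),h(k_3)$ are affinely independent in $H^2(BT^{n+1})\otimes\bq$. Next I would record the key symmetry: a short case check against the piecewise formula \eqref{defhj} shows that $h(j)+h(\overline{j})=x_n-x_{n+1}$ for every vertex $j$. Writing $c:=x_n-x_{n+1}$ and $g:=h-\tfrac12 c$, this says exactly $g(\overline{j})=-g(j)$; as a common translation does not affect affine independence, I may work with $g$ in place of $h$.

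The reduction step is to compare $g$ with the weight function $w(j):=x_j$ (using the paper's convention $x_j=-x_{\overline{j}}$ for $j\geq n+2$), so that $\{w(j):j\in\cv_{2n}\}=\{\pm x_1,\dots,\pm x_{n+1}\}$ consists of $2n+2$ distinct vectors with $w(\overline{j})=-w(j)$, and $\widehat{\alpha}(ij)=w(j)-w(i)$. Defining the linear endomorphism $A$ of $H^2(BT^{n+1})\otimes\bq$ by $A(x_k)=g(k)$ for $1\leq k\leq n+1$, the antisymmetry of both $g$ and $w$ forces $A\,w(j)=g(j)$ for \emph{all} $j$, whence $\alpha(ij)=A\,\widehat{\alpha}(ij)$. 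A direct determinant computation---which, after the obvious identity block, reduces to a single $2\times 2$ minor---gives $\det A=\pm\tfrac12\neq 0$, so $A$ is a $\bq$-isomorphism. Consequently three-independence of $\alpha$ is equivalent to three-independence of $\widehat{\alpha}$, i.e.\ to affine independence of $w(i),w(k_1),w(k_2),w(k_3)$.

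Finally I would settle the model problem: four \emph{distinct} weights among $\pm x_1,\dots,\pm x_{n+1}$ are affinely dependent if and only if they form two antipodal pairs $\{x_l,-x_l,x_{l'},-x_{l'}\}$. Indeed, in a relation $\sum_m\lambda_m w(k_m)=0$ with $\sum_m\lambda_m=0$, any weight $\pm x_l$ whose index $l$ is not shared by a second weight forces its own coefficient to vanish; so a nontrivial relation requires the occurring indices to repeat, and by distinctness a repeated index means an antipodal pair $x_l,-x_l$. A single pair yields only the trivial relation once $\sum_m\lambda_m=0$ is imposed, so dependence occurs precisely for two antipodal pairs. Since $k_1,k_2,k_3\neq i,\overline{i}$, the four-element set $\{i,k_1,k_2,k_3\}$ contains $i$ but not its antipode $\overline{i}$, hence cannot be a union of two antipodal pairs; therefore the four weights are affinely independent and the three axial vectors are linearly independent, as claimed. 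I expect the main friction to be purely computational---verifying $h(j)+h(\overline{j})=c$ across the three branches of \eqref{defhj} and checking $\det A\neq 0$---while the conceptual heart, the two-antipodal-pairs characterization, becomes essentially immediate once the problem is placed in the $\pm x_k$ model.
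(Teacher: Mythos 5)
Your argument is correct; I verified the key identity $h(j)+h(\overline{j})=x_n-x_{n+1}$ across the branches of \eqref{defhj} (it is the paper's own equation \eqref{hiibar}), the invertibility of $A$ (one gets $\det A=-\tfrac12(-1)^n$), the fact that $Aw(j)=g(j)$ extends to all $j$ via the antisymmetry of $g$ and $w$, and the two-antipodal-pairs characterization of affine dependence among distinct weights in $\{\pm x_1,\dots,\pm x_{n+1}\}$. Note, however, that the paper itself offers no proof to compare against: the lemma is imported verbatim from Kuroki's paper (\cite[Lemma 2.6]{ku23}), where the verification is carried out directly on the explicit values of the axial function $\alpha$. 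Your route is genuinely different in structure: rather than checking independence of the vectors $\alpha(ik_m)$ by hand, you transport the whole problem through the $\bq$-isomorphism $A$ back to the non-effective weights $\widehat{\alpha}$, where three-independence reduces to the transparent combinatorial fact that $\{i,k_1,k_2,k_3\}$ cannot be a union of two antipodal pairs. This buys a conceptual explanation (the effective axial function is just a linear change of coordinates away from the $\pm x_k$ model, so independence properties are inherited) at the cost of a small amount of linear-algebra bookkeeping; it also makes clear that the only obstruction to independence at a vertex would come from an antipodal pair, which the GKM-graph structure excludes by construction. The argument is self-contained and could replace the external citation.
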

\begin{rema}
The GKM graph $(\Gamma_{2n}, \alpha)$ is  induced by the effective $T^{n+1}$-action defined in \eqref{eqref3.17}. Note that the vectors in  \eqref{axialvalues1} induces the following homomorphism $\psi : T^{n+1}\to T^{2n+2}$ :
		\[(t_1,\ldots,t_{n+1})\mapsto (t_1t_1^{-1},t_2t_1^{-1},\ldots,t_{n+1}t_1^{-1},t_{n+1}^{-1}t_1^{-1},\ldots,t_2^{-1}t_1^{-1},t_1^{-2})\]
with \[\begin{split}
ker~\psi=&\{\pm1\}\\ im~\psi=&\{(t_1t_1^{-1},t_2t_1^{-1},\ldots,t_{n+1}t_1^{-1},t_{n+1}^{-1}t_1^{-1},\ldots,t_2^{-1}t_1^{-1},t_1^{-2}) : t_1,\ldots,t_{n+1}\in T^{n+1}\}.\\
=&\{(1,r_1,\ldots,r_{n+1},r_{n-1}^{-1}r_nr_{n+1},\ldots, r_{1}^{-1}r_nr_{n+1}, r_nr_{n+1})~ :~ r_1,\ldots,r_{n+1}\in T^{n+1}\}
\end{split}\]
where $r_i=t_{i+1}t_1^{-1}$ for $i=1,\ldots,n$ and $r_{n+1}=t_{n+1}^{-1}t_1^{-1}$. Further, by the first isomorphism theorm, we have the following  \[im~\psi\simeq T^{n+1}/ker~\psi\simeq T^{n+1}/ \bz_2\simeq T^{n+1}.\]
Note that the standard $T^{2n+2}$-action of $Q_{2n}$ is as follows :
\[(t_1,\ldots,t_{2n+2})\cdot[x_1:\cdots:x_{2n+2}]:=[t_1x_1:\cdots:t_{2n+2}x_{2n+2}]\] for $(t_1,\ldots,t_{2n+2})\in T^{2n+2},~~[x_1:\cdots:x_{2n+2}]\in Q_{2n}$. The effective $T^{n+1}$-action on $Q_{2n}$ is in particular the restricted action of $im~\psi\subset T^{2n+2}$.
\begin{equation}\label{eqref3.17}
[x_1:\cdots:x_{2n+2}]\mapsto [x_1:r_1x_2:\cdots:r_{n+1}x_{n+2}:r_{n-1}^{-1}r_nr_{n+1}x_{n+3}:\cdots:r_nr_{n+1}x_{2n+2}]
\end{equation}
One can observe that the axial function of the above action around $1\in\cv_{2n}=Q_{2n}^T$ coincides with \eqref{newaxialvalues1}.
\end{rema}

\begin{rema}
	We also make the following replacements in \eqref{oldrepring}  \begin{itemize}
		\item $y_iy_1^{-1}$ by $y_{i-1}$ for all $i=2,\ldots,n+1$.
		\item $y_{n+1}^{-1}y_1^{-1}$ by $y_{n+1}$.
	\end{itemize}
\end{rema}
\begin{rema}
	Set $f:\cv_{2n}\to R(T^{n+1})$ by 
	\begin{equation}\label{deffj}
	f(j)=e^{2\pi\sqrt{-1}h(j)} ~~~\text{	for  $j\in\cv_{2n}$.}
	\end{equation}
\end{rema}
Therefore, for $j=1,\ldots,n+1$,
\begin{equation}\label{yjwithfj}
	f(j+1)f(1)^{-1}=e^{2\pi\sqrt{-1}x_j}=y_j.
\end{equation}

\begin{notn}
	Hereafter, we denote the GKM graph $(\Gamma_{2n},\alpha)$ by $\mathscr{Q}_{2n}$ and, the $K$- ring $K_\alpha(\Gamma_{2n})$ (see, Definition \ref{kclass})  by $K(\mathscr{Q}_{2n})$.
\end{notn}

\section{Generators and relations of $K(\mathscr{Q}_{2n})$}\label{grkring}
In this section, we introduce two types of $K$- classes of $\mathscr{Q}_{2n}$ and four types of relations among them. In Section \ref{secmainth}, we show that the abovementioned $K$- classes generate $K(\mathscr{Q}_{2n})$.
\subsection{Generators of type I}
\begin{defe}\label{defmv}
	For a vertex $v\in\mathcal{V}_{2n}$, we define the  function $M_v: \mathcal{V}_{2n}\to R(T^{n+1})$  by \[M_v(l)=
	\begin{cases}
 1 ; &\text{ if $l=v$},\\
 e^{2\pi\sqrt{-1}\alpha(\overline{v}k)}\cdot e^{2\pi\sqrt{-1}\alpha(\overline{v}\,\overline{k})}=y_ny_{n+1}^{-1}f(\overline{v})^{-2}; &\text{  if $l=\overline{v}$},\\
e^{2\pi\sqrt{-1}\alpha(lv)}=f(v)f(l)^{-1}; &\text{  if $l\neq v,\overline{v}$}.
\end{cases}\]
Further, we define $M_v^{-1}: \mathcal{V}_{2n}\to R(T^{n+1})$ by \[M_v^{-1}(l):=(M_v(l))^{-1}.\]
\end{defe}
\begin{rema}
	Note that the above definition does not depend on the choice of $k\in \cv_{2n}\setminus \{v,\overline{v}\}$ since for any $k\in \cv_{2n}\setminus\{v,\overline{v}\}$, we have \[\alpha(\overline{v}k)+\alpha(\overline{v}\,\overline{k})=h(k)-h(\overline{v})+h(\overline{k})-h(\overline{v})=h(k)+h(\overline{k})-2h(\overline{v}).\]
	Further, from Definition \ref{defh}, \begin{equation}\label{hiibar}
	h(k)+h(\overline{k})=x_n-x_{n+1}.
	\end{equation} In particular, 
	\begin{equation}
		e^{2\pi\sqrt{-1}\alpha(\overline{v}k)}\cdot e^{2\pi\sqrt{-1}\alpha(\overline{v}\,\overline{k})}=y_ny_{n+1}^{-1}~e^{-4\pi\sqrt{-1}h(\overline{v})}=y_ny_{n+1}^{-1}f(\overline{v})^{-2}.
	\end{equation}
\end{rema}

\begin{propo}
	For any $v\in \mathcal{V}_{2n}$, $M_v\in K(\mathscr{Q}_{2n})$.
\end{propo}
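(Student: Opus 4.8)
The plan is to verify the $K$-class condition $\eqref{kclasscond}$ directly from Definition \ref{defmv}, after first collapsing the three cases defining $M_v$ into a single uniform formula. For $l\neq v,\overline{v}$ we already have $M_v(l)=f(v)f(l)^{-1}$ by definition, and $M_v(v)=1=f(v)f(v)^{-1}$; the only case requiring attention is $l=\overline{v}$. Here I would invoke the relation $\eqref{hiibar}$, namely $h(k)+h(\overline{k})=x_n-x_{n+1}$, together with $\eqref{deffj}$ and $\eqref{yjwithfj}$, to obtain
\[
f(v)f(\overline{v})=e^{2\pi\sqrt{-1}(h(v)+h(\overline{v}))}=e^{2\pi\sqrt{-1}(x_n-x_{n+1})}=y_ny_{n+1}^{-1}.
\]
Substituting this into Definition \ref{defmv} gives $M_v(\overline{v})=y_ny_{n+1}^{-1}f(\overline{v})^{-2}=f(v)f(\overline{v})\cdot f(\overline{v})^{-2}=f(v)f(\overline{v})^{-1}$. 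Thus in fact $M_v(l)=f(v)f(l)^{-1}$ for \emph{every} vertex $l\in\mathcal{V}_{2n}$.

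With this uniform description in hand, checking $\eqref{kclasscond}$ becomes a one-line computation, and the case distinction over the endpoints of an edge disappears entirely. For any edge $e=(xy)\in\mathcal{E}_{2n}$, using $\alpha(xy)=h(y)-h(x)$ from $\eqref{newaxial}$ and hence $e^{2\pi\sqrt{-1}\alpha(xy)}=f(y)f(x)^{-1}$, I would compute
\[
M_v(x)-M_v(y)=f(v)\bigl(f(x)^{-1}-f(y)^{-1}\bigr)=-f(v)f(y)^{-1}\bigl(1-f(y)f(x)^{-1}\bigr).
\]
This is exactly of the form $\beta\bigl(1-e^{2\pi\sqrt{-1}\alpha(e)}\bigr)$ with $\beta=-f(v)f(y)^{-1}$, which is a genuine element of the Laurent polynomial ring $R(T^{n+1})$ in $\eqref{oldrepring}$ (no localization is needed, since $f(v)$ and $f(y)^{-1}$ are honest elements of $R(T^{n+1})$). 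Therefore $M_v$ satisfies the $K$-class condition on every edge and lies in $K(\mathscr{Q}_{2n})$.

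The only genuinely nontrivial step is the identification $M_v(\overline{v})=f(v)f(\overline{v})^{-1}$; everything afterward is formal. The point to watch is that the constant $y_ny_{n+1}^{-1}$ appearing in Definition \ref{defmv} is independent of $v$, and its factorization as $f(v)f(\overline{v})$ — which is precisely what relation $\eqref{hiibar}$ supplies — is exactly what makes the value at $\overline{v}$ conform to the same pattern $f(v)f(\cdot)^{-1}$ as the generic vertices. Once this is secured, the $K$-class condition follows from the elementary factorization of $f(v)(f(x)^{-1}-f(y)^{-1})$ exhibited above.
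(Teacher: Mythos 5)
Your proof is correct, but it is organized differently from the paper's. The paper verifies the congruence \eqref{kclasscond} by a case analysis on how the endpoints of an edge $(jk)$ sit relative to $\{v,\overline{v}\}$, invoking the identity $\alpha(\overline{v}\,\overline{k})=\alpha(kv)$ (Remark \ref{alphavbarkbar}) to handle the exceptional vertex $\overline{v}$. You instead observe up front that the three clauses of Definition \ref{defmv} collapse to the single formula $M_v(l)=f(v)f(l)^{-1}$ for every $l$ --- the value at $\overline{v}$ being $y_ny_{n+1}^{-1}f(\overline{v})^{-2}=f(v)f(\overline{v})\cdot f(\overline{v})^{-2}=f(v)f(\overline{v})^{-1}$ by \eqref{hiibar} --- after which the $K$-class condition on an arbitrary edge follows from the one factorization $f(v)\bigl(f(x)^{-1}-f(y)^{-1}\bigr)=-f(v)f(y)^{-1}\bigl(1-e^{2\pi\sqrt{-1}\alpha(xy)}\bigr)$, with $\beta=-f(v)f(y)^{-1}$ a genuine Laurent monomial in $R(T^{n+1})$. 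Both arguments ultimately rest on the same relation $h(k)+h(\overline{k})=x_n-x_{n+1}$; yours makes explicit that the seemingly special value of $M_v$ at $\overline{v}$ is not special at all, which eliminates the case distinction entirely and is the cleaner route for this particular graph. The paper's phrasing of $M_v(\overline{v})$ as a product of two edge weights reflects the general recipe for such classes on GKM graphs (where no global function like $f$ need exist), but for $\mathscr{Q}_{2n}$ your uniform description is both valid and shorter.
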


\begin{proof}
From Definition \ref{kclass}, it is enough to show that for any edge $(jk)\in\mathcal{E}_{2n}$ (i.e $k\not=j,\overline{j}$), we have \[M_v(j)-M_v(k)=0\,\,(\text{mod } 1-e^{2\pi\sqrt{-1}\alpha(jk)}).\]

We verify the above condition on a case-by-case basis :
\begin{enumerate}
	\item  \underline{If $j=v$ :} For any $k\not=v,\overline{v}$, we have 
\[M_v(j)-M_v(k)=1-(e^{2\pi\sqrt{-1}\alpha(kj)})\equiv 0\,\,(\text{mod } 1-e^{2\pi\sqrt{-1}\alpha(jk)}),\]
since $\alpha(kj)=-~\alpha(jk)$ from \eqref{newaxial}.

\item  \underline{If $j=\overline{v}$ :}  For  $k\not=v,\overline{v}$, we have 
\[\begin{split}M_v(j)-M_v(k)
	&=e^{2\pi\sqrt{-1}\alpha(\overline{v}k)}e^{2\pi\sqrt{-1}\alpha(\overline{v}\,\overline{k})}-e^{2\pi\sqrt{-1}\alpha(kv)}\\
	&=e^{2\pi\sqrt{-1}\alpha(\overline{v}k)}e^{2\pi\sqrt{-1}\alpha(\overline{v}\,\overline{k})}-e^{2\pi\sqrt{-1}\alpha(\overline{v}\overline{k})}~~~~~\text{(see, Remark \ref{alphavbarkbar})}\\
	&=\big(e^{2\pi\sqrt{-1}\alpha(\overline{v}k)}-1\big)e^{2\pi\sqrt{-1}\alpha(\overline{v}\,\overline{k})}\equiv 0\,\,(\text{mod } 1-e^{2\pi\sqrt{-1}\alpha(jk)}).
	\end{split}\]
	
	\item \underline{If $j\not=v,\,\overline{v}$ :} We break this case into two more cases.\\
	
	\begin{enumerate}
		\item \underline{If $k=\overline{v}$ or $k={v}$:} We follow the similar arguements as above.

		\item \underline{If $k\not=v,\overline{v}$ :}  We have 
		\[\begin{split}M_v(j)-M_v(k)
			&=e^{2\pi\sqrt{-1}\alpha(jv)}-e^{2\pi\sqrt{-1}\alpha(kv)}\\
			&=e^{2\pi\sqrt{-1}\alpha(kv)}(e^{2\pi\sqrt{-1}\alpha(jk)}-1) ~~~~~\text{[as, $\alpha(jv)=\alpha(kv)+\alpha(jk)$]}\\
			&\equiv 0 ~~~~(\text{mod } 1-e^{2\pi\sqrt{-1}\alpha(jk)}).
\end{split}\]
\end{enumerate}
\end{enumerate}
Therefore, 	 $M_v\in K(\mathscr{Q}_{2n})$ for any $v\in \mathcal{V}_{2n}$.
\end{proof}

\begin{rema}\label{alphavbarkbar}
	Note that \[
	\begin{split}
	\alpha(\overline{v}\overline{k})&=h(\overline{k})-h(\overline{v})=(x_n-x_{n+1}-h(k))-(x_n-x_{n+1}-h(v))~~~~~ \text{(see, \eqref{hiibar})}\\
	&=	h(v)-h(k)= \alpha(kv).
	\end{split}\]
\end{rema}

\begin{rema}
Similarly, one can show $M_v^{-1}\in K(\mathscr{Q}_{2n})$, for any $v\in \mathcal{V}_{2n}$.
\end{rema}

\begin{propo}\label{genrepring}
	The generator $y_i\in R(T^{n+1})$ for each $i=1,\ldots,n+1$ satisfies the following equality \[y_i=M_{i+1}M_1^{-1}.\]
\end{propo}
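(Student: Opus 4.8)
The plan is to prove the identity pointwise, as an equality of elements of $Maps(\cv_{2n},\, R(T^{n+1}))$. By \eqref{algebramap} the left-hand side $y_i$ is the image of $y_i\in R(T^{n+1})$ under the inclusion $R(T^{n+1})\hookrightarrow K(\mathscr{Q}_{2n})$, i.e.\ the constant $K$-class assigning $y_i$ to every vertex; the right-hand side $M_{i+1}M_1^{-1}$ is the pointwise product of functions. Since $M_{i+1}$ and $M_1^{-1}$ already belong to $K(\mathscr{Q}_{2n})$ and the $K$-ring is closed under products, the product is automatically a $K$-class, so no further verification of \eqref{kclasscond} is needed and it suffices to show
\[
M_{i+1}(l)\,M_1(l)^{-1}=y_i \qquad\text{for every } l\in\cv_{2n}.
\]

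First I would record the two computational inputs and one bookkeeping fact. From \eqref{yjwithfj} we have $f(i+1)f(1)^{-1}=y_i$ for $1\le i\le n+1$, and exponentiating $h(k)+h(\overline{k})=x_n-x_{n+1}$ from \eqref{hiibar} gives $f(k)f(\overline{k})=y_ny_{n+1}^{-1}$ for every $k\in\cv_{2n}$. I would also check that, for $1\le i\le n+1$, the four vertices $1,\overline{1},i+1,\overline{i+1}$ are pairwise distinct; granting this, whenever $l$ equals one of them it differs from the other three, so the three branches of Definition \ref{defmv} select unambiguously at each $l$.

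The argument is then a case analysis on the position of $l$. In the generic case $l\notin\{1,\overline{1},i+1,\overline{i+1}\}$ both values come from the third branch, $M_{i+1}(l)=f(i+1)f(l)^{-1}$ and $M_1(l)=f(1)f(l)^{-1}$, so the quotient collapses to $f(i+1)f(1)^{-1}=y_i$. When $l=i+1$ one has $M_{i+1}(l)=1$ and $M_1(l)=f(1)f(i+1)^{-1}$, and when $l=1$ one has $M_1(l)=1$ and $M_{i+1}(l)=f(i+1)f(1)^{-1}$; both again give $y_i$.

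The only slightly delicate cases are $l=\overline{i+1}$ and $l=\overline{1}$, where the middle branch of Definition \ref{defmv} enters. For $l=\overline{i+1}$ we obtain $M_{i+1}(l)=y_ny_{n+1}^{-1}f(\overline{i+1})^{-2}$ and $M_1(l)=f(1)f(\overline{i+1})^{-1}$, whose quotient is $y_ny_{n+1}^{-1}f(\overline{i+1})^{-1}f(1)^{-1}$; substituting $y_ny_{n+1}^{-1}=f(i+1)f(\overline{i+1})$ from \eqref{hiibar} yields $f(i+1)f(1)^{-1}=y_i$. The case $l=\overline{1}$ is symmetric, using $y_ny_{n+1}^{-1}=f(1)f(\overline{1})$. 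The main obstacle is therefore only bookkeeping: confirming that the four distinguished vertices are genuinely distinct so the branches do not overlap, and correctly invoking \eqref{hiibar} in the two $\overline{v}$-cases; everything else follows immediately from \eqref{yjwithfj}.
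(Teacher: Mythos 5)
Your proof is correct and follows essentially the same route as the paper: a pointwise case analysis of $M_{i+1}(l)M_1(l)^{-1}$ over the vertices, using $f(i+1)f(1)^{-1}=y_i$ in the generic case and the identity $f(k)f(\overline{k})=y_ny_{n+1}^{-1}$ at $l=\overline{1}$ and $l=\overline{i+1}$. The only cosmetic difference is that the paper folds $l=1$ and $l=i+1$ into the generic branch (harmless, since $f(v)f(v)^{-1}=1$) and verifies the $l=\overline{i+1}$ case via an explicit formula for $f(2n+2-i)$, whereas your use of \eqref{hiibar} there is marginally cleaner.
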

\begin{proof}
For $i\in\{1,\ldots,n+1\}$, we evaluate 	$M_{i+1}M_1^{-1}(j)=M_{i+1}(j)(M_1(j))^{-1}$  for all $j\in\cv_{2n}$.

If $j\in \cv_{2n}\setminus\{\overline{1},\overline{i+1}\}$,
\[\begin{split}
M_{i+1}M_1^{-1}(j)&=f(i+1)f(j)^{-1}	(f(1)f(j)^{-1})^{-1}  ~~~~(\text{by Definition \ref{defmv}})\\&=f(i+1)f(1)^{-1}=y_i,~~~~(\text{see, \eqref{yjwithfj}}) 
\end{split}\]

If $j= \overline{1}=2n+2$,
\[\begin{split}
M_{i+1}M_1^{-1}(2n+2)&=f(i+1)f(2n+2)^{-1}	(y_ny_{n+1}^{-1}f(2n+2)^{-2})^{-1}~~~~(\text{by Definition \ref{defmv}})\\
&=f(i+1)f(2n+2)\big(y_ny_{n+1}^{-1}\big)^{-1}\\
&=f(i+1)f(2n+2)\big(f(i+1)f(\overline{i+1})\big)^{-1} ~~~~(\text{see \eqref{deffj}, \eqref{hiibar}})\\
&=f(2n+2)f(\overline{i+1})^{-1}=y_i ~~~~(\text{see \eqref{deffj}, \eqref{defhj}})\\
\end{split},\]

If $j= \overline{i+1}=2n+2-i$,
\[\begin{split}
	M_{i+1}M_1^{-1}(2n+2-i)&=y_ny_{n+1}^{-1}f(2n+2-i)^{-2} (f(1)f(2n+2-i)^{-1})^{-1}\\
	&=y_nf(2n+2-i)^{-1} ~~~~~~~~\text{ (as, $f(1)=y_{n+1}^{-1}$)}\\
	&=y_i         
\end{split},\]

Note that $f(2n+2-i)=\begin{cases}
	y_n\cdot y_i^{-1}; ~~~~~~&i=1,\ldots,n-1,\\
	y_{2n+1-i}\cdot y_{n+1}^{-1} ~~~ &i=n,n+1.
\end{cases}$

Hence, the proposition follows.
\end{proof}

\begin{exam} For $n=2$, we consider $\mathscr{Q}_4$ (see, Figure \ref{fig:gamma4diag}) and $1\in \cv_4$. Then $M_1\in K(\mathscr{Q}_4) $ is given by 
\[	\begin{split}
M_1(1)= 1, ~~~~~~~~~~~~~~~~~~~~~~~~~~~~~~~~~&M_1(2)=f(1)f(2)^{-1}=y_1^{-1}\\
M_1(3)=f(1)f(3)^{-1}=y_2^{-1},~~~~~~~~~~~~~&M_1(4)=f(1)f(4)^{-1}=y_3^{-1}\\
M_1(5)=f(1)f(5)^{-1}=y_1y_2^{-1}y_3^{-1}, ~~~~~~ &M_1(6)=M_1(\overline{1})=y_2y_3^{-1}f(\overline{1})^{-2}= y_2^{-1}y_3^{-1}.
	\end{split}\]	
\end{exam}

\subsection{Generators of type II}

\begin{defe}\label{delta_P}
Let $P$ be any non-empty subset of $\mathcal{V}_{2n}=[2n+2]$ such that the full subgraph $\Gamma_P$ is a complete subgraph of $\Gamma_{2n}$ or  equivalently$^1$,\footnote[1]{$^1$Notice that these are equivalent due to the definition of $\Gamma_{2n}$} $\{i,\overline{i}\}\not\subset P$ for all $i\in \mathcal{V}_{2n}$.

 Then we define the  map $\Delta_P:\cv_{2n}\to R(T^{n+1})$ by
\begin{equation}\label{delta_k}
	\Delta_P(l)=\begin{cases}
		\displaystyle	\prod_{k\notin P\cup \{\overline{l}\}}\big(1-e^{2\pi\sqrt{-1}\alpha(lk)}\big)=	\prod_{k\notin P\cup \{\overline{l}\}}(1-f(k)f(l)^{-1}), & \text{if $l\in P$,}\\
		~~~~~~~~~~~~~~0 &\text{if $l\notin P$.}
\end{cases}\end{equation}
\end{defe} 
 
\begin{rema}
	Note that $\Delta_P$ in \eqref{delta_k} is basically the \emph{Thom class} (equivariant $K$- theoretic) of the GKM subgraph $\Gamma_P$. Geometrically, the Thom class (equivariant $K$- theoretic) of a codimension $2m$ GKM submanifold $N$ of a GKM manifold $Y$, for effective $T$-action, with an almost complex structure can be defined as follows :

	Consider the normal bundle $\nu$ of the submanifold $N$, and define its \emph{Thom space} as $Th(\nu):= Y/D(\nu)^c$, where $D(\nu)^c$ denotes the complement of the unit disc bundle embedded in $Y$. Since $\nu$ inherits its orientation from the almost complex structure (in fact, this becomes a complex $m$-dimensional vector bundle), we have the following isomorphism known as the \emph{Thom isomorphism} :
	\[K_T(N)\to K_T(Th(\nu))\]
	On the other hand, we have the homomorphism $K_T(Th(\nu))\to K_T(Y)$, induced from the collapsing map $Y\to Th(\nu)$. Therefore, we obtain the following homomorphism by composing the two aforementioned homomorphisms :
	\[\psi_N: K_T(N)\to K_T(Y)\]	
	Then, we can define the Thom class of the codimension $2m$ GKM submanifold $N$ by
	\[\psi_N({1})=\tau_N\in K_T(Y).\] Because of the definition of a GKM
	graph, the GKM subgraph $\Gamma_P\subset \mathscr{Q}_{2n}$ is induced from some GKM submanifold $N$ of $Q_{2n}$; therefore, the Thom class $\Delta_P$ of $\Gamma_P$ is the combinatorial analogue of the Thom class $\tau_N$ of $N$.
\end{rema}

\begin{lema}
Let $P\subset \mathcal{V}_{2n}$ such that $\{i,\overline{i}\}\not\subset P$ for all $i\in \mathcal{V}_{2n}$. Then \begin{equation}\Delta_P\in K(\mathscr{Q}_{2n}).\end{equation}
\end{lema}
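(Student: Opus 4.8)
The plan is to verify the defining congruence of Definition \ref{kclass} directly on each edge. By that definition it suffices to show that for every edge $(xy)\in\ce_{2n}$ (so $y\neq x,\overline{x}$) one has
\[\Delta_P(x)-\Delta_P(y)\equiv 0 \pmod{1-e^{2\pi\sqrt{-1}\alpha(xy)}},\]
and I would organize the verification according to the membership of $x$ and $y$ in $P$.

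First I would dispose of the easy cases. If neither $x$ nor $y$ lies in $P$, then $\Delta_P(x)=\Delta_P(y)=0$ and there is nothing to prove. If exactly one endpoint lies in $P$, say $x\in P$ and $y\notin P$, then $\Delta_P(y)=0$, and it remains to observe that the factor indexed by $k=y$ actually occurs in the product defining $\Delta_P(x)$: indeed $y\notin P$ together with $y\neq\overline{x}$ force $y\notin P\cup\{\overline{x}\}$, so $(1-f(y)f(x)^{-1})=(1-e^{2\pi\sqrt{-1}\alpha(xy)})$ divides $\Delta_P(x)$, giving $\Delta_P(x)\equiv 0$ as required.

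The substantial case is $x,y\in P$, and this is where I expect the real work to be. Since $\{i,\overline{i}\}\not\subset P$, membership of $x,y$ in $P$ gives $\overline{x},\overline{y}\notin P$, and the edge condition together with $x\neq y$ gives $\overline{x}\neq y$, $\overline{y}\neq x$ and $\overline{x}\neq\overline{y}$. Writing $S:=\cv_{2n}\setminus(P\cup\{\overline{x},\overline{y}\})$, the index set of $\Delta_P(x)$ is $S\cup\{\overline{y}\}$ and that of $\Delta_P(y)$ is $S\cup\{\overline{x}\}$, so the two products share a common part indexed by $S$ and differ only in the base point $f(x)^{-1}$ versus $f(y)^{-1}$ and in a single exceptional factor. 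I would compute the difference in the quotient ring $R(T^{n+1})/(1-f(y)f(x)^{-1})$, in which $f(x)^{-1}\equiv f(y)^{-1}$; after this substitution the common factor $\prod_{k\in S}(1-f(k)f(y)^{-1})$ pulls out and the difference reduces to
\[\Delta_P(x)-\Delta_P(y)\equiv\Big(\prod_{k\in S}(1-f(k)f(y)^{-1})\Big)\,f(y)^{-1}\big(f(\overline{x})-f(\overline{y})\big)\pmod{1-f(y)f(x)^{-1}}.\]

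Thus everything comes down to showing that $f(\overline{x})-f(\overline{y})$ is divisible by $1-f(y)f(x)^{-1}$. Here I would invoke the relation \eqref{hiibar}, i.e. $f(k)f(\overline{k})=y_ny_{n+1}^{-1}$, to write $f(\overline{x})=y_ny_{n+1}^{-1}f(x)^{-1}$ and $f(\overline{y})=y_ny_{n+1}^{-1}f(y)^{-1}$; a short manipulation then yields
\[f(\overline{x})-f(\overline{y})=-\,y_ny_{n+1}^{-1}f(y)^{-1}\big(1-f(y)f(x)^{-1}\big),\]
which is manifestly divisible by $1-f(y)f(x)^{-1}=1-e^{2\pi\sqrt{-1}\alpha(xy)}$. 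This finishes the crux case and hence shows $\Delta_P\in K(\mathscr{Q}_{2n})$. The only genuine obstacle is the bookkeeping of the index sets in the both-in-$P$ case, together with the observation that the antipodal relation $f(k)f(\overline{k})=y_ny_{n+1}^{-1}$ is exactly what forces the leftover term $f(\overline{x})-f(\overline{y})$ into the required ideal; the remaining cases are routine.
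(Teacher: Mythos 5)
Your proof is correct, and in the only nontrivial case it takes a genuinely different route from the paper's. For an edge $e=(ij)$ with both endpoints in $P$, the paper argues abstractly: it uses that $\Gamma_P$ is invariant under the connection $\nabla_e$, so that $\nabla_e$ matches each factor $1-e^{2\pi\sqrt{-1}\alpha(ik)}$ of $\Delta_P(i)$ with a factor $1-e^{2\pi\sqrt{-1}\alpha(\nabla_e(ik))}$ of $\Delta_P(j)$, and then invokes property (3) of Definition \ref{intaxialfn} to get a factor-by-factor congruence modulo $1-e^{2\pi\sqrt{-1}\alpha(e)}$. You instead exploit the explicit combinatorics of $\mathscr{Q}_{2n}$: the two index sets differ only by exchanging $\overline{x}$ and $\overline{y}$, so after reducing $f(x)^{-1}\equiv f(y)^{-1}$ the whole difference collapses to the single term $f(\overline{x})-f(\overline{y})$, which the antipodal relation \eqref{hiibar} puts into the ideal. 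Both arguments are complete; yours is more elementary and self-contained for this graph (you never need to identify the connection or verify that $P$ is $\nabla$-invariant, which the paper asserts without proof), while the paper's argument is the standard Thom-class computation that works verbatim for any connection-invariant GKM subgraph of any GKM graph. The bookkeeping in your crux case (that $\overline{x},\overline{y}\notin P$, $\overline{x}\neq\overline{y}$, and $y\neq\overline{x}$) is handled correctly, and the easy cases agree with the paper's.
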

\begin{proof}
	Let $e=(ij)\in\mathcal{E}_{2n}$ be an edge in $\Gamma_{2n}$ (i.e $j\neq \overline{i},~i\neq \overline{j}$ ) for $i,j\in\mathcal{V}_{2n}$. We verify the condition \eqref{kclasscond} on a case-by-case basis : \\
	\begin{enumerate}[(i)]
		\item  If  $i,j\notin K$, then  $\Delta_P(i)=\Delta_P(j)=0$. Hence \[\Delta_P(i)-\Delta_P(j)=0\,\,\,(mod\,\, 1-e^{2\pi\sqrt{-1}\alpha(ij)}).\] 
		
		\item If  $i\in K$, $j\notin K$, then from \eqref{delta_k}, we have \[\displaystyle\Delta_P(i)-\Delta_P(j)=\prod_{k\notin P\cup \{\overline{i}\}}\big(1-e^{2\pi\sqrt{-1}\alpha(ik)}\big)-0=0\,\,\,(mod\,\, 1-e^{2\pi\sqrt{-1}\alpha(ij)}).\]
		The last equality holds as $j\notin P\cup \{\overline{i}\}$. This implies $(1-e^{2\pi\sqrt{-1}\alpha(ij)})$ is a factor of $\Delta_P(i)$.\\
		
		\item Finally, we assume $i,j\in P$. Let $e'=(ik)$ be an edge with initial point $i$ such that $k\notin P$. Therefore from \eqref{delta_k}, we deduce that $\big(1-e^{2\pi\sqrt{-1}\alpha(ik)}\big)$ divides $\Delta_P(i)$.
		
		\noindent
		Since $P$ is invariant under the connection $\nabla_e$ at $e=(ij)$, we have $\nabla_{e}(ik)$ is not an edge in $\Gamma_P$ i.e the terminal point of $\nabla_{e}(ik)$ is not in $P$. Therefore, $\big(1-e^{2\pi\sqrt{-1}\alpha(\nabla_{e}(ik))}\big)$ is one of the factors in $\Delta_P(j)$.
		
		\noindent Furthermore, from the third property of Definition \ref{intaxialfn},  we have  the following  
		\begin{equation}
			1-e^{2\pi\sqrt{-1}\alpha(\nabla_{e}(e'))}\equiv 1-e^{2\pi\sqrt{-1}\alpha(e')} ~~~(\text{mod}~1-e^{2\pi\sqrt{-1}\alpha(e)}). 	\end{equation} 
Since, one can have the similar relations for every other factors in $\Delta_P(i)$, we obtain \[\Delta_P(i)-\Delta_P(j)\equiv0\,\,\,(mod\,\, 1-e^{2\pi\sqrt{-1}\alpha(ij)}).\]
	\end{enumerate}

Hence, the lemma follows.	
\end{proof}

\begin{exam} For the GKM graph $\mathscr{Q}_4$ and $P=\{2,4,6\}$ such that $\Gamma_P$ is a complete subgraph (see, Figure \ref{fig:gamma4diag}), $\Delta_P\in K(\mathscr{Q}_4) $ is given by 
	\[	\begin{split}
\Delta_P(2) 
=(1-y_1^{-1})(1-y_2y_1^{-1}), ~~~~~~~~&\Delta_P(4) 
=(1-y_3^{-1})(1-y_2y_1^{-1}),\\
\Delta_P(6)= (1-y_3^{-1})(1-y_1^{-1}),
 ~~~~~~~~~&\Delta_P(1)=\Delta_P(3)=\Delta_P(5)=0.
	\end{split}\]	
\end{exam}

\subsection{Relations  between $M_v$ and $\Delta_P$'s}\label{relations}
Within this section, we present four types of relations  between $M_v$ and $\Delta_P$'s.

 In the results below, we frequently use $1\in R(T^{n+1})$ as $1:\cv_{2n}\to R(T^{n+1})$ (see \eqref{algebramap}) which takes any vertices $v\in\cv_{2n}$ to $1\in R(T^{n+1})$.
\subsubsection{Relation 1}
Let $J$ be a subset of $\mathcal{V}_{2n}$. We consider 
\begin{equation}\label{F_J}
	F_J:=\begin{cases}
		&1-M_v\,    \text{ if $J=\mathcal{V}_{2n}\setminus\{v\}$ for a vertex $v\in \mathcal{V}_{2n}$},\\
		& \Delta_J\, \text{ if $\{i,\overline{i}\}\not\subset J$ for every $i\in\mathcal{V}_{2n}$}.
	\end{cases}
\end{equation}

\begin{lema}\label{relation1}
	The following relation holds :
	\begin{equation}
		\displaystyle\prod_{\cap J=\emptyset} F_J=0.
	\end{equation}
\end{lema}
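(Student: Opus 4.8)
The plan is to exploit a single structural feature shared by both kinds of building blocks $F_J$: viewed as a function $\cv_{2n}\to R(T^{n+1})$, each $F_J$ vanishes at every vertex lying \emph{outside} $J$. Once this is established, the relation follows at once, since multiplication in $K(\mathscr{Q}_{2n})$ is computed pointwise on vertices (recall $K(\mathscr{Q}_{2n})$ is a subring of $Maps(\cv_{2n},R(T^{n+1}))$ with pointwise operations).

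First I would verify the vanishing property in each of the two cases of \eqref{F_J}. If $F_J=\Delta_J$ with $\{i,\overline{i}\}\not\subset J$, then by Definition \ref{delta_P} we have $\Delta_J(l)=0$ for every $l\notin J$, so the property is immediate. If instead $F_J=1-M_v$ with $J=\mathcal{V}_{2n}\setminus\{v\}$, then the complement of $J$ in $\mathcal{V}_{2n}$ is the single vertex $v$, and by Definition \ref{defmv} we have $M_v(v)=1$; hence $(1-M_v)(v)=0$, which is exactly the vanishing at the unique vertex outside $J$. Thus in both cases $F_J(l)=0$ whenever $l\notin J$.

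Next I would assemble the product. For any vertex $l\in\cv_{2n}$, the value of $\prod_{\cap J=\emptyset}F_J$ at $l$ equals $\prod_{\cap J=\emptyset}F_J(l)$. The hypothesis $\bigcap J=\emptyset$ means precisely that no vertex belongs to all of the sets $J$ occurring in the product; in particular $l\notin\bigcap J$, so there is at least one index $J$ in the collection with $l\notin J$. For that $J$ the factor $F_J(l)$ vanishes by the previous step, whence $\prod_{\cap J=\emptyset}F_J(l)=0$. Since the chosen vertex $l$ was arbitrary, the product vanishes at every vertex and therefore equals the zero element of $K(\mathscr{Q}_{2n})$, which is the assertion.

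I do not expect any genuine obstacle here. The only point demanding care is the bookkeeping that the complement of $\mathcal{V}_{2n}\setminus\{v\}$ is exactly $\{v\}$, so that the $1-M_v$ factors fit the same ``vanishing off $J$'' pattern as the $\Delta_J$ factors; beyond this, the argument is a direct pointwise evaluation, and no use of the $K$-class condition \eqref{kclasscond} or of any linear-independence hypothesis is needed.
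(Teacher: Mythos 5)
Your proof is correct and is exactly the argument the paper intends: the paper's one-line proof ("follows from Definition \ref{defmv} and Definition \ref{delta_P}") is precisely the observation that each $F_J$ vanishes at every vertex outside $J$, so the emptiness of $\bigcap J$ forces the pointwise product to vanish everywhere. You have simply written out the details the paper leaves implicit.
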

\begin{proof}
The lemma follows from Definition \ref{defmv} and Definition \ref{delta_P}.
\end{proof}

\subsubsection{Relation 2}
Let $I\subset\cv_{2n}=[2n+2]$ be a subset with $|I|=n$  such that the full subgraph $\Gamma_I$ is a complete subgraph of $\Gamma_{2n}$. Hence, there exists an unique pair $\{b,\overline{b}\}\in I^c=\cv_{2n}\setminus I$ such that \[\Delta_P,\,\Delta_L\in K(\mathscr{Q}_{2n}),\] where $P=(I\cup \{b\})^c=I^c\setminus\{b\}$ and $L=(I\cup\{\overline{b}\})^c=I^c\setminus\{\overline{b}\}$. 
\begin{lema}\label{relation3}
Let	$I\subset\cv_{2n}=[2n+2]$ be a subset which satisfies the above conditions. Then the following holds :
\begin{equation}\label{relation3eqn}
	\prod_{i\in I}(1-M_i)=\Delta_{(I\cup \{b\})^c}+M_b\cdot \Delta_{(I\cup \{\overline{b}\})^c}.\end{equation}
	\end{lema}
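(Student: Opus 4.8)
The plan is to verify the identity \eqref{relation3eqn} pointwise, evaluating both sides at an arbitrary vertex $l \in \cv_{2n}$ and showing that the values in $R(T^{n+1})$ agree. Since $K(\mathscr{Q}_{2n}) \subset Maps(\cv_{2n}, R(T^{n+1}))$ with addition and multiplication defined pointwise, this suffices. The natural organizing principle is to partition the vertices according to how $l$ sits relative to the index set $I$ and the distinguished pair $\{b, \overline{b}\}$, since the supports of $\Delta_{(I\cup\{b\})^c}$ and $\Delta_{(I\cup\{\overline{b}\})^c}$ are exactly $P = I^c \setminus \{b\}$ and $L = I^c \setminus \{\overline{b}\}$ respectively, and these control which terms on the right-hand side vanish.

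First I would analyze the left-hand side. For a fixed $l$, each factor $(1 - M_i)(l)$ equals $1 - M_i(l)$; by Definition \ref{defmv}, $M_i(l) = 1$ precisely when $l = i$, so $(1 - M_i)(l) = 0$ whenever $i = l$. Thus $\prod_{i \in I}(1 - M_i)(l)$ vanishes automatically if $l \in I$. This immediately handles the case $l \in I$: the left side is zero, and on the right side $l \in I$ means $l \notin P$ and $l \notin L$, so both $\Delta_{(I\cup\{b\})^c}(l)$ and $\Delta_{(I\cup\{\overline{b}\})^c}(l)$ vanish, giving zero as well. The remaining cases are $l \in I^c = \{b, \overline{b}\} \cup (\text{the rest})$; but since $\Gamma_I$ complete forces $I^c$ to contain exactly the pair $\{b,\overline{b}\}$ together with complementary vertices, I would isolate the subcases $l = b$, $l = \overline{b}$, and $l \in P \cap L$. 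In each subcase I would expand $\prod_{i\in I}(1 - M_i(l))$ using the explicit formula for $M_i(l)$ from Definition \ref{defmv} — noting that for $l \neq i, \overline{i}$ one has $M_i(l) = f(i)f(l)^{-1}$, so $1 - M_i(l) = 1 - f(i)f(l)^{-1} = 1 - e^{2\pi\sqrt{-1}\alpha(li)}$ — and compare this product against the explicit product defining $\Delta_{(I\cup\{b\})^c}(l)$, where the index runs over $k \notin (I\cup\{b\})^c \cup \{\overline{l}\}$, i.e.\ over $k \in I \cup \{b\} \setminus \{\overline{l}\}$.

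The matching of the two sides when $l \in P \cap L$ is the bookkeeping heart of the argument: the product over $i \in I$ of $(1 - e^{2\pi\sqrt{-1}\alpha(li)})$ should reproduce $\Delta_{(I\cup\{b\})^c}(l)$ plus the $M_b(l)$-weighted contribution of $\Delta_{(I\cup\{\overline{b}\})^c}(l)$, the two terms accounting for whether the factor corresponding to the index $b$ versus $\overline{b}$ is incorporated, and I would use the identity $h(k) + h(\overline{k}) = x_n - x_{n+1}$ from \eqref{hiibar} together with the definition of $M_b$ on the $\overline{b}$-vertex to reconcile the factor that $\Delta$ omits (the term indexed by $\overline{l}$, which is excluded from the $\Delta$ product) against the explicit $y_n y_{n+1}^{-1} f(\overline{v})^{-2}$ value of $M_v$ at $\overline{v}$. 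The main obstacle will be precisely this last reconciliation at $l = \overline{b}$ (and symmetrically $l = b$): here $M_b(l)$ takes its exceptional value $y_n y_{n+1}^{-1} f(\overline{b})^{-2}$, and one must check that multiplying $\Delta_{(I\cup\{\overline{b}\})^c}(\overline{b})$ by this factor, and adding the single surviving term from the first summand, collapses exactly to the left-hand product $\prod_{i\in I}(1 - f(i)f(\overline{b})^{-1})$. I expect this to require carefully tracking which vertex index is excluded by the $\{\overline{l}\}$ condition in each $\Delta$ and invoking \eqref{hiibar} to rewrite $f(\overline{k})$ in terms of $f(k)$; once this single algebraic identity is established, the other subcases follow by the same or simpler manipulations.
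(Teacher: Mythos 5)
Your proposal follows essentially the same route as the paper: a pointwise verification over the cases $l\in I$, $l=b$, $l=\overline{b}$, and $l\in I^c\setminus\{b,\overline{b}\}$, with the crux being the splitting of the exceptional factor $(1-M_{\overline{l}})(l)=1-e^{2\pi\sqrt{-1}\alpha(lb)}e^{2\pi\sqrt{-1}\alpha(l\overline{b})}$ into the two summands of the right-hand side. One small remark: the case $l=\overline{b}$ that you single out as the main obstacle is in fact immediate, since $\Delta_{(I\cup\{\overline{b}\})^c}(\overline{b})=0$ kills the term carrying the exceptional value of $M_b$; the real bookkeeping is, as you also correctly note, in the case $l\in P\cap L$, where $\overline{l}\in I$.
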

\begin{proof}
We verify the claim for every vertices in $\cv_{2n}$.

	Let $v\in I$. Hence, $v\notin (I\cup \{b\})^c, (I\cup \{\overline{b}\})^c$. Therefore from \eqref{delta_k}, \[\Delta_{(I\cup \{b\})^c}(v)=0= \Delta_{(I\cup \{\overline{b}\})^c}(v).\] Also, $\displaystyle\prod_{i\in I}(1-M_i)(v)=0$ as $(1-M_v)$ is a factor of LHS in \eqref{relation3eqn}.

	For the vertex $b~(\notin I)$, we have 
\[	\begin{split}
	 	\Big(\Delta_{(I\cup \{b\})^c}+M_b\cdot \Delta_{(I\cup \{\overline{b}\})^c}\Big)(b)&=0+M_b(b)\cdot \Delta_{(I\cup \{\overline{b}\})^c}(b)\\
	 	&=0+1\cdot \prod_{i\in I}\big(1-e^{2\pi\sqrt{-1}\alpha(bi)}\big)=\prod_{i\in I}(1-M_i)(b).	 	
	 	\end{split}\]
	 
	 For the vertex $\overline{b}~(\notin I)$, we have 
	 	\[\begin{split}
	 		\Big(\Delta_{(I\cup \{b\})^c}+M_b\cdot \Delta_{(I\cup \{\overline{b}\})^c}\Big)(\overline{b})&=\Delta_{(I\cup \{b\})^c}(\overline{b})+M_b(\overline{b})\cdot 0\\
	 		&=\prod_{i\in I}\big(1-e^{2\pi\sqrt{-1}\alpha(\overline{b}i)}\big)+0=\prod_{i\in I}(1-M_i)(\overline{b}).	 	
	 	\end{split}\]
	 	
If $v\notin I\cup \{b, \overline{b}\}$, then $\overline{v}\in I$. Thus we have
	 		\[	\begin{split}
	 			\prod_{i\in I}(1-M_i)(v)&=(1-M_{\overline{v}})(v)\cdot\prod_{i\in I\setminus\{\overline{v}\}}(1-e^{2\pi\sqrt{-1}\alpha(vi)})\\
	 			&=(1-e^{2\pi\sqrt{-1}\alpha(vb)}e^{2\pi\sqrt{-1}\alpha(v\,\overline{b})})\cdot\prod_{i\in I\setminus\{\overline{v}\}}(1-e^{2\pi\sqrt{-1}\alpha(vi)})\\
	 			&=\big((1-e^{2\pi\sqrt{-1}\alpha(vb)})+e^{2\pi\sqrt{-1}\alpha(vb)}(1-e^{2\pi\sqrt{-1}\alpha(v\,\overline{b})})\big)\cdot\prod_{i\in I\setminus\{\overline{v}\}}(1-e^{2\pi\sqrt{-1}\alpha(vi)})\\
	 			&=\prod_{i\in (I\setminus\{\overline{v}\})\cup\{b\}}(1-e^{2\pi\sqrt{-1}\alpha(vi)}) +e^{2\pi\sqrt{-1}\alpha(vb)}\cdot \prod_{i\in (I\setminus\{\overline{v}\})\cup\{\overline{b}\}}(1-e^{2\pi\sqrt{-1}\alpha(vi)})\\
	 			&=\big(\Delta_{(I\cup \{b\})^c}+M_b\cdot \Delta_{(I\cup \{\overline{b}\})^c}\big)(v)
	 			\end{split}\]
	 			Hence, the lemma follows.
\end{proof}

\subsubsection{Relation 3} 
\begin{lema}\label{relation4}Fix $i\in\cv_{2n}$. Let $P\subset \cv_{2n}$ be a subset containing $i$ such that $|P|>1$ and $\Gamma_P$ is a complete subgraph of $\Gamma_{2n}$.
Then we have the following :
\begin{equation}
	\Delta_P\cdot (1-M_i)=\Delta_{P\setminus\{i\}}.
\end{equation}
\end{lema}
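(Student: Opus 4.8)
The plan is to verify the identity $\Delta_P \cdot (1 - M_i) = \Delta_{P \setminus \{i\}}$ by evaluating both sides at each vertex $l \in \cv_{2n}$, using the explicit formulas from Definitions \ref{defmv} and \ref{delta_P}. Since both $\Delta_P$ and $M_i$ are maps into $R(T^{n+1})$, pointwise equality suffices. I would split the analysis according to the position of $l$ relative to the sets $P$ and $P \setminus \{i\}$.

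First I would dispose of the easy cases. If $l \notin P$, then $\Delta_P(l) = 0$, so the left side vanishes; since $P \setminus \{i\} \subset P$, we also have $l \notin P \setminus \{i\}$, so $\Delta_{P \setminus \{i\}}(l) = 0$ as well. The case $l = i$ requires that $i \in P \setminus \{i\}$ fail, so $\Delta_{P \setminus \{i\}}(i) = 0$; on the left, the factor $(1 - M_i(i)) = (1 - 1) = 0$ by Definition \ref{defmv}, so both sides vanish. The substantive case is $l \in P \setminus \{i\}$, where I would compute both sides using the product formula.

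For $l \in P \setminus \{i\}$, the left side is $\Delta_P(l) \cdot (1 - M_i(l))$. By Definition \ref{delta_P}, $\Delta_P(l) = \prod_{k \notin P \cup \{\overline{l}\}} (1 - e^{2\pi\sqrt{-1}\alpha(lk)})$, and since $l \neq i, \overline{i}$ (the latter because $\{i,\overline{i}\} \not\subset P$), we have $M_i(l) = e^{2\pi\sqrt{-1}\alpha(li)}$, so the extra factor is exactly $(1 - e^{2\pi\sqrt{-1}\alpha(li)})$. Meanwhile $\Delta_{P \setminus \{i\}}(l) = \prod_{k \notin (P \setminus \{i\}) \cup \{\overline{l}\}} (1 - e^{2\pi\sqrt{-1}\alpha(lk)})$. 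The key observation is that the index set for $\Delta_{P \setminus \{i\}}(l)$ is obtained from that of $\Delta_P(l)$ by adjoining the single index $k = i$, since removing $i$ from $P$ removes it from $P \cup \{\overline{l}\}$ (noting $i \neq \overline{l}$ because $l \neq \overline{i}$). Hence the product for $\Delta_{P \setminus \{i\}}(l)$ equals the product for $\Delta_P(l)$ times the one new factor $(1 - e^{2\pi\sqrt{-1}\alpha(li)})$, which matches the left side precisely.

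The main obstacle, though minor, is the careful bookkeeping of the index sets: I must confirm that $i \notin P \cup \{\overline{l}\}$ so that appending $i$ genuinely enlarges the product by exactly one factor, rather than duplicating an existing one. This requires $i \neq \overline{l}$, which holds because $l \in P$ and $\{i, \overline{i}\} \not\subset P$ forces $l \neq \overline{i}$, hence $i \neq \overline{l}$. Once this is verified, the identity is immediate from matching the two products factor by factor.
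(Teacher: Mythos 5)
Your proof is correct and follows essentially the same route as the paper: a pointwise comparison of the two sides using the explicit product formulas, noting that both vanish off $P\setminus\{i\}$ and that on $P\setminus\{i\}$ the factor $1-M_i(l)=1-e^{2\pi\sqrt{-1}\alpha(li)}$ is exactly the one factor by which the two products differ. Your extra bookkeeping (checking $i\neq\overline{l}$ so the index set genuinely grows by one element) is a detail the paper leaves implicit, but the argument is the same.
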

	\begin{proof}
		Note that, $\Delta_P\cdot (1-M_i)$ is not zero only on $P\cap (\cv_{2n}\setminus\{i\})=P\setminus\{i\}$. In particular, we have 
		\[\Delta_P\cdot (1-M_i)(v)=\begin{cases}
			\prod_{j\notin(P\setminus\{i\})\cup\{\overline{v}\}}(1-e^{2\pi\sqrt{-1}\alpha(vj)})&\text{ if $v\in P\setminus\{i\}$}\\
				0&\text{ if $v\notin P\setminus\{i\}$}
		\end{cases}\]
		Therefore, the lemma follows from \eqref{delta_k}.
	\end{proof}

\subsubsection{Relation 4}
We define $X:\mathcal{V}_{2n}\to R(T^{n+1})$ by (see, \eqref{deffj})
\[X(k)=y_ny_{n+1}^{-1}f(k)^{-2},\text{\,\, for all $k\in \mathcal{V}_{2n}$}.\]
\begin{lema}\label{relation2}
	For every $v\in \mathcal{V}_{2n}$, we have the following equality :
	\[M_v\cdot M_{\overline{v}}=X.\]
\end{lema}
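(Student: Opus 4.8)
The plan is to prove the equality of the two maps $M_v\cdot M_{\overline{v}}$ and $X$ in $Maps(\mathcal{V}_{2n},\, R(T^{n+1}))$ by evaluating both sides at an arbitrary vertex $l\in\mathcal{V}_{2n}$ and comparing. Since the product of two $K$-classes is taken pointwise, I would compute $(M_v\cdot M_{\overline{v}})(l)=M_v(l)\,M_{\overline{v}}(l)$ and show it equals $X(l)=y_ny_{n+1}^{-1}f(l)^{-2}$. Following Definition \ref{defmv}, the natural partition of the vertices is into the three cases $l=v$, $l=\overline{v}$, and $l\notin\{v,\overline{v}\}$, and I would check each separately.

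The single algebraic fact driving every case is the identity $f(k)f(\overline{k})=y_ny_{n+1}^{-1}$, valid for any vertex $k\in\mathcal{V}_{2n}$. This follows by combining \eqref{deffj}, which gives $f(k)f(\overline{k})=e^{2\pi\sqrt{-1}(h(k)+h(\overline{k}))}$, with \eqref{hiibar}, which asserts $h(k)+h(\overline{k})=x_n-x_{n+1}$, together with $y_n=e^{2\pi\sqrt{-1}x_n}$ and $y_{n+1}^{-1}=e^{-2\pi\sqrt{-1}x_{n+1}}$. I would isolate this identity at the very start of the proof so that each case reduces to a substitution.

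For the case $l=v$ I would use $M_v(v)=1$ and observe that, since $\overline{\overline{v}}=v$, the vertex $v$ is exactly the distinguished ``barred'' vertex for the class $M_{\overline{v}}$, whence $M_{\overline{v}}(v)=y_ny_{n+1}^{-1}f(v)^{-2}=X(v)$; the product is then $X(v)$. The case $l=\overline{v}$ is symmetric: here $M_{\overline{v}}(\overline{v})=1$ and $M_v(\overline{v})=y_ny_{n+1}^{-1}f(\overline{v})^{-2}=X(\overline{v})$. For the remaining case $l\notin\{v,\overline{v}\}$, which also forces $l\neq\overline{\overline{v}}=v$ so that $l$ lies in the generic branch of both classes, I would write $M_v(l)\,M_{\overline{v}}(l)=\big(f(v)f(l)^{-1}\big)\big(f(\overline{v})f(l)^{-1}\big)=f(v)f(\overline{v})f(l)^{-2}$ and then apply the identity $f(v)f(\overline{v})=y_ny_{n+1}^{-1}$ to obtain exactly $X(l)$.

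There is essentially no analytic obstacle here; the only point requiring care is the bookkeeping around the fixed-point-free involution $k\mapsto\overline{k}$ on $\mathcal{V}_{2n}$, in particular tracking that the barred vertex of $M_{\overline{v}}$ is $\overline{\overline{v}}=v$, so that the special values of $M_v$ and $M_{\overline{v}}$ land on the complementary vertices $\overline{v}$ and $v$ respectively. Once the three cases are matched to the correct branches of Definition \ref{defmv}, the verification is a direct computation, and no relation in $R(T^{n+1})$ beyond $f(k)f(\overline{k})=y_ny_{n+1}^{-1}$ is needed.
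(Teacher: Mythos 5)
Your proposal is correct and is exactly the argument the paper intends: the paper's one-line proof cites Definition \ref{defmv} together with the identity $f(v)f(\overline{v})=y_ny_{n+1}^{-1}$ coming from \eqref{hiibar}, and your case-by-case verification at $l=v$, $l=\overline{v}$, and $l\notin\{v,\overline{v}\}$ simply writes out the details that the paper leaves implicit. No gaps.
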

\begin{proof}
The lemma follows from Definition \ref{defmv} and the fact $f(v)f(\overline{v})=y_ny_{n+1}^{-1}$ (see, \eqref{hiibar}).
\end{proof}
\begin{rema}
	Note that $X\in K(\mathscr{Q}_{2n})$ since $M_v, M_{\overline{v}}\in K(\mathscr{Q}_{2n})$.
\end{rema}

\section{Proof of the main theorem}\label{secmainth} 
In this section, we present the equivariant $K$- ring of  even dimensional complex quadrics in terms of generators and relations. To convey our main theorem  (see, Theorem \ref{mainth}) precisely, we prepare some notation as follows :

Let $\mathbf{M}$ denote the set of $K$-classes \[\{\mathbf{M}_v,~ \mathbf{M}_v^{-1}~:~v\in\cv_{2n}\},\] and, $\mathbf{D}$ denote  the set of $K$-classes  \[\{\mathbf{\Delta}_P~:~\text{$P\subset\cv_{2n}, ~\Gamma_P$ is a complete subgraph of $\Gamma_{2n}$}\}.\]
Let $\bz[\mathbf{M}, \mathbf{D}]$ be the polynomial ring generated by all elements in $\mathbf{M}, \mathbf{D}$. Let $\mathfrak{I}\lhd \bz[\mathbf{M}, \mathbf{D}]$ be an ideal generated by the following types of elements :
 \begin{enumerate}[(i)]
 	\item $\prod_{\cap J=\emptyset} F_J$ for $F_J$ as in \eqref{F_J}.
 		\item {$	\prod_{i\in I}(1-\mathbf{M}_i)-\mathbf{\Delta}_{(I\cup \{b\})^c}-\mathbf{M}_b\cdot \mathbf{\Delta}_{(I\cup \{\overline{b}\})^c}$ for $I$ and $b$ as in Lemma \ref{relation3}.}
 	\item {$\mathbf{\Delta}_P\cdot (1-\mathbf{M}_i)-\mathbf{\Delta}_{P\setminus\{i\}}$ for $\{i\}\subsetneq P$.}
 	  \item $\mathbf{M}_v\mathbf{M}_{\overline{v}}-\mathbf{M}_w\mathbf{M}_{\overline{w}}$ for every distinct $v,w\in\cv_{2n}$.
 \end{enumerate}

We define $\bz[\mathscr{Q}_{2n}]:=\bz[\mathbf{M}, \mathbf{D}]/\mathfrak{I}$. Let $\widetilde{\phi}: \bz[\mathbf{M}, \mathbf{D}]\to K(\mathscr{Q}_{2n})$ be a ring homomorphism which takes $\mathbf{M}_v,~\mathbf{M}^{-1}_v,~\text{and~}\mathbf{\Delta}_P$ to ${M}_v,~{M}^{-1}_v,~\text{and~}{\Delta}_P$  respectively,
 and let  \begin{equation}\label{phi}
	{\phi}: \bz[\mathscr{Q}_{2n}]\to K(\mathscr{Q}_{2n})\end{equation} be the homomorphism induced from $\widetilde{\phi}$.

In other words, the following diagram commutes.
\begin{equation}\label{commdiag}
	\begin{tikzcd}
		&\bz[\mathbf{M}, \mathbf{D}]\arrow[d]	\arrow[dr, "\widetilde{\phi}"] &\\
		&\bz[\mathscr{Q}_{2n}] \arrow[r, "\phi"]&K(\mathscr{Q}_{2n})
			\end{tikzcd}
	\end{equation}
	Now we state the main theorem of this paper,  whose proof relies on Lemma  \ref{surjmainth} and Lemma \ref{injectivitymainth}. 
\begin{guess}\label{mainth}
The homomorphism $\phi$ is an isomorphism of rings, i.e 
	\begin{equation}\label{mainth1}
		\phi: \bz[\mathscr{Q}_{2n}]\xrightarrow{\cong} K(\mathscr{Q}_{2n}).\end{equation} 
	In particular, for the effective $T^{n+1}$- action on $Q_{2n}$, 	\begin{equation}
		\label{mainth2}
		K_{T^{n+1}}(Q_{2n})\cong K(\mathscr{Q}_{2n})\cong  \bz[\mathscr{Q}_{2n}].
		\end{equation}

\end{guess}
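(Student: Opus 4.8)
The plan is to prove the isomorphism \eqref{mainth1} by establishing surjectivity and injectivity of $\phi$ separately, as signaled by the reference to Lemma \ref{surjmainth} and Lemma \ref{injectivitymainth}. The second isomorphism in \eqref{mainth2} is then immediate from the Knutson--Rosu/Vezzosi--Vistoli identification \eqref{gkmkth}, $K_{T^{n+1}}(Q_{2n})\cong K(\mathscr{Q}_{2n})$, combined with the first. Throughout, the map $\phi$ is well-defined because every relation generating $\mathfrak{I}$ has already been verified to hold in $K(\mathscr{Q}_{2n})$: relation (i) is Lemma \ref{relation1}, relation (ii) is Lemma \ref{relation3}, relation (iii) is Lemma \ref{relation4}, and relation (iv) follows since both $\mathbf{M}_v\mathbf{M}_{\overline v}$ and $\mathbf{M}_w\mathbf{M}_{\overline w}$ map to the common class $X$ by Lemma \ref{relation2}.

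\textbf{Surjectivity.} First I would show $\phi$ is surjective. The image of $\phi$ is the $R(T^{n+1})$-subalgebra of $K(\mathscr{Q}_{2n})$ generated by the classes $M_v$, $M_v^{-1}$, and $\Delta_P$; note that $R(T^{n+1})$ itself already lies in the image, since Proposition \ref{genrepring} expresses each generator $y_i = M_{i+1}M_1^{-1}$, so $i(R(T^{n+1}))\subseteq \im\phi$. The standard approach is to produce, for each vertex $v\in\cv_{2n}$, a ``point class'' supported at $v$ — a $K$-class taking a unit value at $v$ and $0$ at every other vertex — and to show these lie in the image, since any $f\in K(\mathscr{Q}_{2n})$ is then an $R(T^{n+1})$-combination of such point classes. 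The natural candidate is a suitable $\Delta_P$ with $P=\{v\}$ (or a product of the $\Delta_P$'s built from the complete-subgraph combinatorics of $\Gamma_{2n}$): since $\Delta_{\{v\}}(v)=\prod_{k\neq v,\overline v}(1-f(k)f(v)^{-1})$ is the product of $1-e^{2\pi\sqrt{-1}\alpha(vk)}$ over all edges at $v$, it is supported only at $v$, and the three-independence of $(\Gamma_{2n},\alpha)$ guarantees this value is nonzero. One then argues by downward induction on a filtration of $K(\mathscr{Q}_{2n})$ (ordering the vertices and peeling off the leading term of $f$ at each vertex using these supported classes, with the localization $R(T^{n+1})\hookrightarrow K_\alpha(\Gamma)$ allowing division by the nonzero Euler-class values) that every $K$-class is hit.

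\textbf{Injectivity and the main obstacle.} The harder direction is injectivity of $\phi$, and I expect this to be the principal obstacle. The cleanest route is a rank/freeness argument: exhibit an explicit $R(T^{n+1})$-module basis (or a generating set with the right cardinality) of $\bz[\mathscr{Q}_{2n}]$ coming from monomials in the $\mathbf{M}_v$ and $\mathbf{\Delta}_P$ reduced modulo $\mathfrak{I}$, and show that $\phi$ sends it to an $R(T^{n+1})$-basis of $K(\mathscr{Q}_{2n})$. Here the four relation types do the essential bookkeeping: relation (iv) collapses all products $\mathbf{M}_v\mathbf{M}_{\overline v}$ to a single class $X$, so monomials in the $\mathbf{M}_v$ can be reduced to avoid pairing $v$ with $\overline v$; relations (ii) and (iii) let one rewrite products $\prod(1-\mathbf{M}_i)$ and $\mathbf{\Delta}_P(1-\mathbf{M}_i)$ in terms of the $\mathbf{\Delta}$'s, controlling how the two families of generators interact; and relation (i) imposes the support conditions forcing products over incompatible index sets to vanish. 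The delicate point will be proving that these relations are \emph{sufficient} — that no further relations hold in $K(\mathscr{Q}_{2n})$ — which amounts to checking that the proposed basis has exactly the expected size $|\cv_{2n}|=2n+2$ as the rank of the free $R(T^{n+1})$-module $K(\mathscr{Q}_{2n})$ (since $Q_{2n}$ is equivariantly formal with $\dim H^*(Q_{2n})=2n+2$), and that $\phi$ maps it isomorphically. Establishing this combinatorial completeness of the relation set, rather than any single computation, is where the real work lies.
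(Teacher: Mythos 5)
Your overall architecture --- surjectivity plus injectivity, with \eqref{mainth2} deduced from \eqref{mainth1} via \eqref{gkmkth}, and a filtration by an ordering of the vertices --- matches the paper's (Lemmas \ref{surjmainth} and \ref{injectivitymainth}). But both halves of your sketch contain genuine gaps. For surjectivity, ``division by the nonzero Euler-class values'' is not a legitimate step: the values $\Delta_{\{v\}}(v)=\prod_{k\neq v,\overline{v}}(1-e^{2\pi\sqrt{-1}\alpha(vk)})$ are not units in $R(T^{n+1})$, so dividing by them proves surjectivity only after inverting elements, not integrally. The actual mechanism is that, after clearing the values of $f$ at vertices $1,\dots,j-1$, the congruences \eqref{kclasscond} along the edges from $j$ to the \emph{already-cleared adjacent} vertices force $f_j(j)$ to be divisible \emph{in} $R(T^{n+1})$ by the product of exactly those Euler factors (pairwise coprimality coming from $2$-independence). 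This is also why $\Delta_{\{j\}}$ is the wrong ``point class'': $f_j(j)$ need not be divisible by the full product over all $2n$ edges at $j$, only by the factors indexed by cleared neighbours. The paper instead uses the nested family $1,\ (1-M_1),\dots,(1-M_1)\cdots(1-M_n),\ \Delta_{\{n+2,\dots,2n+2\}},\dots,\Delta_{\{2n+2\}}$, where the switch from $(1-M_i)$-products to Thom classes at vertex $n+2$ is forced by the $\{i,\overline{i}\}$ structure of $\Gamma_{2n}$ (the vertex $\overline{j}$ is not adjacent to $j$ and contributes no factor).

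For injectivity you correctly identify that the issue is sufficiency of the relations, but you defer the argument (``where the real work lies''), so this half is a statement of intent. The two missing ingredients supplied by the paper are: (a) Lemma \ref{lemma5.4}, showing that $\bz[\mathscr{Q}_{2n}]/\langle F_J\mid v\notin J\rangle\simeq R(T^{n+1})$ for each vertex $v$ --- this is where relations (i)--(iv) are actually exercised, to rewrite every $\mathbf{\Delta}_P$ and every $\mathbf{M}_{\overline{k}}$ in terms of $\mathbf{M}_i^{\pm1}$, $i\in I_v$, and it is the precise sense in which the relation set is complete; and (b) the observation (Remark \ref{surjremark1}) that every element of $\bz[\mathscr{Q}_{2n}]$ is itself a $K$-class and hence admits the same triangular normal form \eqref{formatkclass}, after which vanishing of the image kills the coefficients $h_1,\dots,h_{2n+2}$ one vertex at a time because each localization $\bz[M_i,M_i^{-1}\mid i\in I_v]$ is an integral domain. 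Your rank count against $\dim H^*(Q_{2n})=2n+2$ could in principle replace (b), but you would still need (a), or an equivalent spanning argument, to bound the abstract ring from above.
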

In the proofs below, we frequently use $M_i$ and $\Delta_P$ instead of $\widetilde{\phi}(\mathbf{M}_i)$ and $\widetilde{\phi}(\mathbf{\Delta}_P)$, respectively.
\begin{lema}\label{surjmainth}
	The homomorphism $\phi: \bz[\mathscr{Q}_{2n}]\to K(\mathscr{Q}_{2n})$ is surjective.
	\end{lema}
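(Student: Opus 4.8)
The plan is to show that the image of $\phi$ contains a generating set for $K(\mathscr{Q}_{2n})$ as a ring. Since $\phi$ sends $\mathbf{M}_v,\mathbf{M}_v^{-1},\mathbf{\Delta}_P$ to $M_v,M_v^{-1},\Delta_P$, surjectivity of $\phi$ is equivalent to showing that the $M_v$'s and $\Delta_P$'s together generate $K(\mathscr{Q}_{2n})$ as a ring (equivalently, as an $R(T^{n+1})$-algebra, since by Proposition \ref{genrepring} we already have $y_i = M_{i+1}M_1^{-1}$ in the image, so the whole scalar ring $R(T^{n+1})$ lies in $\im\phi$). Thus the task reduces to a statement purely about $K(\mathscr{Q}_{2n})$: every $K$-class is an $R(T^{n+1})$-linear combination of products of the $\Delta_P$'s.

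First I would set up a filtration of $K(\mathscr{Q}_{2n})$ by support, or equivalently exploit a total ordering $v_1,\dots,v_{2n+2}$ of the vertices compatible with the poset structure coming from the axial function (a linear extension so that the ``localization at a vertex'' triangularizes). The key mechanism is the standard GKM/localization argument: given an arbitrary $K$-class $f\in K(\mathscr{Q}_{2n})$, one strips off its values vertex by vertex. The generators $\Delta_P$ play the role of the equivariant Thom/Schubert classes: for a single-element-complement situation, $\Delta_P$ vanishes off $P$ and on the ``top'' vertex of $P$ takes the value $\prod_{k\notin P\cup\{\overline l\}}(1-f(k)f(l)^{-1})$, which is (up to a unit in $R(T^{n+1})$) the product of the edge-factors pointing out of that vertex. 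This is precisely the nonzero local datum needed to cancel the leading term of $f$.

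The concrete induction I would run: process vertices in the chosen order. At vertex $v$, having already arranged that $f$ vanishes on all earlier vertices, the $K$-class condition \eqref{kclasscond} forces $f(v)$ to be divisible (in $R(T^{n+1})$) by the product $\prod_{e\in\ce_v,\, t(e)\text{ earlier}}(1-e^{2\pi\sqrt{-1}\alpha(e)})$ of the edge-factors along edges from $v$ back to already-treated vertices. Choosing $P$ to be $v$ together with all \emph{later} vertices (so that the complement, hence the product defining $\Delta_P(v)$, matches exactly that divisor), one finds $\Delta_P(v)$ agrees with that product up to a unit; subtracting an appropriate $R(T^{n+1})$-multiple $c\cdot\Delta_P$ kills $f(v)$ while preserving vanishing on earlier vertices. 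Iterating over all vertices exhausts $f$, so $f\in\im\phi$. Relation 3 (Lemma \ref{relation4}) guarantees that the various $\Delta_P$ one needs are already products of the basic generators, so no new classes are introduced beyond those in $\mathbf{M}\cup\mathbf{D}$.

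The main obstacle will be verifying the divisibility and the unit-comparison at each step carefully: one must check that when the accumulated constraints from the edges \eqref{kclasscond} are combined, the resulting common factor of $f(v)$ is exactly $\Delta_P(v)$ up to an honest unit of $R(T^{n+1})$ (a monomial $\pm y^a$), rather than merely up to a non-unit. This requires using three-independence (to control which edge-factors are coprime and to ensure the connection-invariant set $P$ is well-behaved) together with the explicit form of $h$ in Definition \ref{defh} and the identity $f(k)+f(\overline k)$ type relation from \eqref{hiibar}. Handling the paired vertices $\{v,\overline v\}$ correctly — where $M_v$ has the exceptional value $y_ny_{n+1}^{-1}f(\overline v)^{-2}$ at $\overline v$ — is where the argument is most delicate, and Relation 4 (Lemma \ref{relation2}) is what reconciles that exceptional behavior.
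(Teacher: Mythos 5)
Your overall strategy is the paper's: reduce to showing the $M_v$'s and $\Delta_P$'s generate $K(\mathscr{Q}_{2n})$ over $R(T^{n+1})$ (using $y_i=M_{i+1}M_1^{-1}$ to get the scalars), then fix an ordering of the vertices and strip off the value of $f$ at each vertex in turn, using \eqref{kclasscond} to see that the leading value is divisible by the product of edge-factors back to the already-killed vertices. That is exactly the induction carried out in the paper, ending in the normal form \eqref{formatkclass}.

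However, there is a concrete gap in your choice of the class to subtract at each step. You propose to take $P=\{v\}\cup\{\text{all later vertices}\}$ and subtract a multiple of $\Delta_P$. But $\Delta_P$ is only defined (Definition \ref{delta_P}) when $\Gamma_P$ is a complete subgraph, i.e.\ when $\{i,\overline{i}\}\not\subset P$ for all $i$; for the vertices $v=1,\dots,n+1$ in the natural ordering the set $\{v,v+1,\dots,2n+2\}$ contains antipodal pairs (e.g.\ $2$ and $\overline{2}=2n+1$), so the class you want to subtract does not exist among the generators. The paper avoids this by using a different family of classes for the first half of the vertices: at vertex $j\le n+1$ it subtracts $h_j(1-M_1)\cdots(1-M_{j-1})$, which vanishes on $1,\dots,j-1$ and whose value at $j$ is exactly $\prod_{i<j}(1-e^{2\pi\sqrt{-1}\alpha(ji)})$ (no exceptional value of any $M_i$ occurs since $\overline{j}\ge n+2$), and only switches to the Thom classes $\Delta_{\{n+j,\dots,2n+2\}}$ from vertex $n+2$ onward, where these index sets are antipodal-pair-free. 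With that substitution your induction goes through, and the values match on the nose rather than merely up to a unit, so the unit-comparison you flag as the main obstacle does not actually arise. Also note that neither Relation 3 nor Relation 4 is needed for surjectivity; they enter only in the injectivity argument.
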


\begin{proof}
This is enough to prove that $\widetilde{\phi}:\bz[\mathbf{M}, \mathbf{D}] \to K(\mathscr{Q}_{2n})$ is surjective. Consider an element $f\in K(\mathscr{Q}_{2n})$. For the vertex  $1\in\cv_{2n}$, one can write $f(1)\in R(T^{n+1})$  (see, Definition \ref{kclass}) as
\[f(1)=\sum_{\mathbf{j}}c_{\mathbf{j}}y_1^{j_1}\cdots y_{n+1}^{j_{n+1}}=h_1\] 
where $c_{\mathbf{j}}\in\bz$ and $\mathbf{j}=(j_1,\ldots, j_{n+1})\in \bz^{n+1}$.

Further, from Proposition \ref{genrepring}, it follows that $M_{i+1}(1)=y_i$ for all $1\leq i\leq n+1$.  Hence 
\[f(1)=\sum_{\mathbf{j}}c_{\mathbf{j}}M_2^{j_1}\cdots M_{n+2}^{j_{n+1}}(1)=h_1.\]
{This means that, there exists an element in $\bz[\mathbf{M}_i, \mathbf{M}_i^{-1}~|~2\leq i\leq n+2]\subset \bz[\mathbf{M}, \mathbf{D}]$ whose image under $\widetilde{\phi}$ coincides with $f(1)$ on the vertex $1\in \cv_{2n}$.}

We next put $f_2=f-h_1$ and hence $f_2(1)=0$. By using the congruence relation \eqref{kclasscond} on the edge $(21)\in\ce_{2n}$, we have
\[f_2(2)-f_2(1)\equiv0~~~\text{mod}~1-e^{2\pi\sqrt{-1}\alpha(21)}=(1-M_1)(2).\]
Therefore, $f_2(2)=h_2(1-M_1)(2)$ for some $h_2\in R(T^{n+1})$.
By Proposition \ref{genrepring}, it follows that $M_{i+1}M_1^{-1}=y_i$ for all $1\leq i\leq n+1$. Hence,  \[h_2\in \bz[M_{i+1}M_1^{-1}, M_{i+1}^{-1}M_1~|1\leq i\leq n+1]\subset \bz[\mathbf{M}, \mathbf{D}] .\]

This shows that $h_2(1-M_1)$ is in the image of $\widetilde{\phi}$.
Put \[f_3=f_2-h_2(1-M_1)~(=f-h_1-h_2(1-M_1)).\]
Notice that \[f_3(1)=0=f_3(2),\] where the first equality follows due to the fact $f_2(1)=0$,  $M_1(1)=1$, and the second equality follows by $f_2(2)=h_2(1-M_1)(2)$. 

 By using the congruence relations on the edge $(31)$ and $(32)\in\ce_{2n}$, we may write
\[f_3(3)=h_3(1-M_1)(1-M_2)(3)\] for some $h_3\in R(T^{n+1})\subset \bz[\mathbf{M}, \mathbf{D}]$. We put \[f_4:=f_3-h_3(1-M_1)(1-M_2)\] which satisfies $f_4(1)=f_4(2)=f_4(3)=0$.
By iterating the similar procedure $n+2$ times, we get an element
\[f_{n+2}:=f_{n+1}-h_{n+1}(1-M_1)\cdots(1-M_n)\] for some $h_{n+1}\in R(T^{n+1})\subset \bz[\mathbf{M}, \mathbf{D}]$ and $f_{n+1}\in K(\mathscr{Q}_{2n})$ satisfying $f_{n+1}(i)=0$ for $1\leq i\leq n$ and $f_{n+1}(n+1)=h_{n+1}(1-M_1)\cdots(1-M_n)(n+1)$.

Furthermore, we have 
\begin{equation}
	\begin{split}\label{surj1}
f_{n+2}&=f_{n+1}-h_{n+1}(1-M_1)\cdots(1-M_n)\\
&=f_n-h_{n}(1-M_1)\cdots(1-M_{n-1})-h_{n+1}(1-M_1)\cdots(1-M_n)\\
&~~~~~~~~\vdots\\
&=f-h_1-h_2(1-M_1)-\cdots-h_{n}(1-M_1)\cdots(1-M_{n-1})-h_{n+1}(1-M_1)\cdots(1-M_n).
\end{split}
\end{equation}
which statisfies $f_{n+2}(i)=0$ for all $1\leq i\leq n+1$.

Therefore by using \eqref{kclasscond} for the edges $(n+2~i)\in\ce_{2n}$ for all $1\leq i\leq n$, one can observe \[f_{n+2}(n+2)\equiv 0~~~\text{mod}~1-e^{2\pi\sqrt{-1}\alpha(n+2~i)}\]for all $1\leq i\leq n$. In particular, from the definition of $\Delta_{\{n+2,\ldots,2n+2\}}$ (see, \eqref{delta_k}), one can write   \[f_{n+2}(n+2)=h_{n+2}\Delta_{\{n+2,\ldots,2n+2\}}(n+2).\]
for some $h_{n+2}\in  R(T^{n+1})\subset \bz[\mathbf{M}, \mathbf{D}]$. We put \[f_{n+3}=f_{n+2}-h_{n+2}\Delta_{\{n+2,\ldots,2n+2\}}\] which satisfies $f_{n+3}(1)=\cdots =f_{n+3}(n+2)=0$ since $\Delta_{\{n+2,\ldots,2n+2\}}(i)=0$ for all $1\leq i\leq n+1$.
Similarly, for  $2\leq j\leq n+2$, there exists $h_{n+j}\in  R(T^{n+1})\subset \bz[\mathbf{M}, \mathbf{D}]$ such that
\begin{equation}
	f_{n+j+1}:=f_{n+j}-h_{n+j}\Delta_{\{n+j,\ldots,2n+2\}}
\end{equation}
 satisfying $f_{n+j+1}(1)=\cdots=f_{n+j+1}(n+j)=0$.

Notice that, when $j=n+2$, \[f_{2n+3}:=f_{2n+2}-h_{2n+2}\Delta_{\{2n+2\}}\] satisfying  $f_{2n+3}(1)=\cdots=f_{{2n+3}}(2n+2)=0$ i.e $f_{2n+3}\equiv 0$.

Therefore, $f_{2n+2}=h_{2n+2}\Delta_{\{2n+2\}}$, and we have
\begin{equation}\label{surj2}
	\begin{split}
		f_{2n+1}&=h_{2n+1}\Delta_{\{2n+1,2n+2\}}+h_{2n+2}\Delta_{\{2n+2\}}\\
		f_{2n}&=h_{2n}\Delta_{\{2n,2n+1,2n+2\}}+h_{2n+1}\Delta_{\{2n+1,2n+2\}}+h_{2n+2}\Delta_{\{2n+2\}}\\
		&~~~~\vdots\\
		f_{n+2}&=h_{n+2}\Delta_{\{n+2,\ldots,2n+2\}}+\cdots+h_{2n}\Delta_{\{2n,2n+1,2n+2\}}+h_{2n+1}\Delta_{\{2n+1,2n+2\}}+h_{2n+2}\Delta_{\{2n+2\}}
\end{split}
\end{equation}
Therefore, we have the following from \eqref{surj1} and \eqref{surj2}, 
\begin{equation}\label{formatkclass}
	\begin{split}
		f=h_1+h_2(1-M_1)&+\cdots+h_{n+1}(1-M_1)\cdots(1-M_n)\\&+h_{n+2}\Delta_{\{n+2,\ldots,2n+2\}}+\cdots+h_{2n+1}\Delta_{\{2n+1,2n+2\}}+h_{2n+2}\Delta_{\{2n+2\}}
	\end{split}
	\end{equation}
	where $h_i\in R(T^{n+1})\subset\bz[\mathbf{M}, \mathbf{D}]$ for each $1\leq i\leq 2n+2$.

Hence, the lemma follows.
\end{proof}

\begin{rema}\label{surjremark1}
	As both ${\bf M}$ and ${\bf D}$ consist of $K$-classes, any $f\in\bz[{\bf M}, {\bf D}]$ is also a $K$-class. Consequently, $f$ can be expressed in the format of \eqref{formatkclass}, by similar arguements as in Lemma \ref{surjmainth}. 
Further,   for any $g\in\bz[\mathscr{Q}_{2n}]=\bz[{\bf M,~D}]/\mathfrak{I}$, one can choose  $f\in\bz[{\bf M}, {\bf D}]$  of the form \eqref{formatkclass} such that $g=f+\mathfrak{I}$.
\end{rema}

Let $v\in \cv_{2n}=[2n+2]$, we define $I_v\subset [n+2]\subset\cv_{2n}$ by 
\[I_v=\begin{cases}
[n+2]\setminus\{v\} &\text{ if } 1\leq v\leq n+1\\
[n+2]\setminus\{\overline{v}\} &\text{ if } n+2\leq v\leq 2n+2\\
\end{cases}\]
\begin{lema}\label{lemma5.4}
	Let $v\in\cv_{2n}$ and $\langle F_J~|~v\notin J\rangle$ be an ideal in $\bz[\mathscr{Q}_{2n}]$ which is generated by $F_J$ (see, \eqref{F_J}) for all $v\notin J$. Then we have the following isomorphism :
	\begin{equation}\label{threeisom}
		\bz[\mathscr{Q}_{2n}]/\langle F_J~|~v\notin J\rangle\simeq \bz[M_i, M_i^{-1}~|~i\in I_v]\simeq R(T^{n+1}).
	\end{equation}
\end{lema}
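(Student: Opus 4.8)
The plan is to realise both isomorphisms in \eqref{threeisom} through a single homomorphism, \emph{evaluation at $v$}. Set $Q_v:=\bz[\mathscr{Q}_{2n}]/\langle F_J\mid v\notin J\rangle$ and let $\iota:\bz[M_i,M_i^{-1}\mid i\in I_v]\hookrightarrow Q_v$ be the inclusion of the subring generated by the classes $M_i^{\pm1}$, $i\in I_v$. First I would record the effect of the ideal: among the generators of the form $1-M_w$ (where $J=\cv_{2n}\setminus\{w\}$) the condition $v\notin J$ forces $w=v$, so $M_v=1$ in $Q_v$, while every generator $\Delta_J$ with $v\notin J$ becomes $0$. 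Let $\mathrm{ev}_v:K(\mathscr{Q}_{2n})\to R(T^{n+1})$, $f\mapsto f(v)$, be evaluation at $v$. Since $(1-M_v)(v)=0$ and $\Delta_J(v)=0$ whenever $v\notin J$, the composite $\mathrm{ev}_v\circ\phi$ annihilates $\langle F_J\mid v\notin J\rangle$ and descends to $\overline{\mathrm{ev}}_v:Q_v\to R(T^{n+1})$.

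The right-hand isomorphism in \eqref{threeisom} will come from computing $\overline{\mathrm{ev}}_v(M_i)=M_i(v)$ for $i\in I_v$. By Definition \ref{defmv} and \eqref{defhj}, each such value is a single Laurent monomial $e^{2\pi\sqrt{-1}(h(i)-h(v))}$, except for the value $y_ny_{n+1}^{-1}f(v)^{-2}$ occurring when $\overline v\in I_v$; a direct computation shows that the resulting $n+1$ monomials generate $R(T^{n+1})$ \emph{freely} (their weights form a $\bz$-basis of $(\mathfrak t^{n+1}_\bz)^*$). Writing $A:=\bz[T_i^{\pm1}\mid i\in I_v]$ for the abstract Laurent ring and $\pi:A\twoheadrightarrow\bz[M_i,M_i^{-1}\mid i\in I_v]$, $T_i\mapsto M_i$, the composite $\overline{\mathrm{ev}}_v\circ\iota\circ\pi:A\to R(T^{n+1})$ carries the variables to this basis and is therefore a ring isomorphism. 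Hence $\pi$ is injective, so $\bz[M_i,M_i^{-1}\mid i\in I_v]\cong A\cong R(T^{n+1})$, and $\overline{\mathrm{ev}}_v\circ\iota$ is an isomorphism.

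It then remains to show that $\iota$ is surjective, which yields the middle isomorphism. The idea is to rerun the elimination of Lemma \ref{surjmainth} relative to a vertex ordering adapted to $v$: choose a complete subgraph $S\subset\cv_{2n}$ with $|S|=n+1$ and $v\in S$, listed with $v$ first, so that its complement $S^c$ is again a complete subgraph with $v\notin S^c$. Following Remark \ref{surjremark1} with this ordering, an arbitrary element of $Q_v$ is represented by a normal form whose ``$S$-block'' is $h_1+\sum_{k\ge2}h_k\prod_{j<k}(1-M_{s_j})$ and whose ``$\Delta$-block'' is an $R(T^{n+1})$-combination of classes $\Delta_J$ with $J\subseteq S^c$. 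In $Q_v$ every such $\Delta_J$ vanishes (as $v\notin J$) and every term past $h_1$ carries the factor $1-M_v=0$; thus every element of $Q_v$ equals some $h_1\in R(T^{n+1})$. Finally, $R(T^{n+1})\subseteq\bz[M_i,M_i^{-1}\mid i\in I_v]$ inside $Q_v$: by Proposition \ref{genrepring}, $R(T^{n+1})$ is generated by the $M_j$ with $j\in\{1,\dots,n+2\}$, and each such $M_j$ either has $j\in I_v$, or equals $M_v=1$, or is $M_{\overline v}$ with $\overline v\notin I_v$; in the last case Relation 4 (Lemma \ref{relation2}) and $M_v=1$ give $M_{\overline v}=M_vM_{\overline v}=M_{n+1}M_{n+2}$, which lies in the subring because $n+1,n+2\in I_v$. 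Therefore $\iota$ is onto, and being the inclusion of a subring it is an isomorphism; composing with the right-hand isomorphism gives $Q_v\cong R(T^{n+1})$.

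The main obstacle is the surjectivity of $\iota$ in the previous paragraph—specifically, producing the $v$-adapted normal form so that \emph{all} surviving Thom-class terms are indexed by subsets missing $v$, and, in tandem, verifying that the ``missing'' class $M_{\overline v}$ re-enters the subring through Relation 4. By contrast, the free-generation step is a direct but case-by-case monomial computation, and the descent of $\mathrm{ev}_v$ is immediate once the action of the ideal is identified.
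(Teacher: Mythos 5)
Your second step (evaluation at $v$ sends the $M_i$, $i\in I_v$, to a set of Laurent monomials that freely generate $R(T^{n+1})$, whence injectivity of the composite out of the abstract Laurent ring) is exactly the paper's argument with $\psi\circ p$. The genuine divergence is in the surjectivity step, and that is where your proposal has a gap. The paper proves surjectivity of $p$ by reducing the \emph{generators} one at a time inside $\bz[\mathscr{Q}_{2n}]/\langle F_J\mid v\notin J\rangle$: Lemma \ref{relation4} enlarges $\Delta_L$ to the case $|L|=n+1$ with $v\in L$, Lemma \ref{relation3} swaps $\overline{b}$ for $b$ and ultimately rewrites $\Delta_{\{1,\dots,n+1\}}$ as $(1-M_{n+2})\cdots(1-M_{2n+1})$, and Lemma \ref{relation2} recovers the $M_{\overline{k}}$. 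Every step is a manipulation of the relations (i)--(iv) defining $\mathfrak{I}$. You instead invoke the normal form of Remark \ref{surjremark1}. That remark asserts two things: that a $K$-class can be written in the shape \eqref{formatkclass} (this is Lemma \ref{surjmainth}, an identity in $K(\mathscr{Q}_{2n})$), and that the representative is congruent to the original \emph{modulo $\mathfrak{I}$}. The second assertion is the nontrivial one: knowing $\widetilde{\phi}(\tilde g)=\widetilde{\phi}(f)$ only gives $\tilde g-f\in\ker\widetilde{\phi}$, and upgrading this to $\tilde g-f\in\mathfrak{I}$ amounts to $\ker\widetilde{\phi}\subseteq\mathfrak{I}$, i.e.\ to the injectivity of $\phi$ --- which in the paper is deduced \emph{from} Lemma \ref{lemma5.4} (via the Corollary and Lemma \ref{injectivitymainth}). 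So routing the surjectivity of $\iota$ through the mod-$\mathfrak{I}$ normal form risks circularity unless you first show, by symbolic reduction using the relations, that the normal form holds in $\bz[\mathscr{Q}_{2n}]$ itself; but that reduction is essentially the generator-by-generator argument the paper's proof carries out. (You would also need to re-derive the normal form for your $v$-adapted ordering, since Lemma \ref{surjmainth} only treats the ordering $1,2,\dots,2n+2$; that extension is routine given that $S$ and $S^c$ are complete, but it is an extra verification.)

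Two smaller points. First, your final reduction ``$M_{\overline{v}}=M_{n+1}M_{n+2}$ with $n+1,n+2\in I_v$'' fails for $v=n+2$, where $\overline{v}=n+1$ and $I_{n+2}=[n+2]\setminus\{n+1\}$ contains $v$ itself rather than $\overline{v}$; in that case $M_{n+2}(n+2)=1$ and the values $M_i(n+2)$, $i\in I_{n+2}$, generate only $\bz[y_1^{\pm1},\dots,y_{n-1}^{\pm1},y_{n+1}^{\pm1}]$, so your ``free generation'' claim also breaks there. This edge case is inherited from the paper's definition of $I_v$ (the paper's ``the proof for other vertices will follow similarly'' glosses over it), but since you assert the monomial computation explicitly you should either exclude or repair $v=n+2$. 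Second, the descent of $\mathrm{ev}_v$ and the identification of the ideal's effect ($M_v=1$, $\Delta_J=0$ for $v\notin J$) are correct and match the paper.
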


\begin{proof}
	This is enough to prove the lemma for the vertex $v=1\in \cv_{2n}$. The proof for other vertices will follow similarly.
	
	When $v=1\in \cv_{2n}$, \eqref{threeisom} reduces to the following 
	\[\bz[\mathscr{Q}_{2n}]/\langle F_J~|~1\notin J\rangle\simeq \bz[M_2, M_2^{-1},\ldots,M_{n+2},M_{n+2}^{-1}]\simeq R(T^{n+1}).\]
	
{\bf Claim :} The following canonical morphism (see, \eqref{commdiag}) is surjective :
\[p: \bz[M_2, M_2^{-1},\ldots,M_{n+2},M_{n+2}^{-1}]\to 	\bz[\mathscr{Q}_{2n}]/\langle F_J~|~1\notin J\rangle.\]
Note that the following elements generate $\bz[\mathscr{Q}_{2n}]/\langle F_J~|~1\notin J\rangle$ :
\begin{equation}\label{injproofgen1}
	\{\overline{M_v}, ~~\overline{M_v^{-1}}: v\in\cv_{2n}\}\cup\{\overline{\Delta_P} : P\subset \cv_{2n} ,~ \{i,\overline{i}\}\not\subset P \text{ for all $1\leq i\leq 2n+2$}\},\end{equation} where \[\begin{split}
\overline{M_v}:=M_v+(\mathfrak{I}+\langle F_J~&|~1\notin J\rangle),~~~~~
\overline{M_v^{-1}}:=M_v^{-1}+(\mathfrak{I}+\langle F_J~|~1\notin J\rangle),\\
\overline{\Delta_P}&:=\Delta_P+(\mathfrak{I}+\langle F_J~|~1\notin J\rangle).
\end{split}\]

	Let $L\subset \cv_{2n}$ such that $\{i,\overline{i}\}\not\subset L$ for every $1\leq i\leq n+1$. If $1\in L$  and $|L|<n+1$, then from Lemma \ref{relation4}, we have 
	\[\Delta_L=\Delta_{L\cup\{i\}}\cdot(1-M_i)~~\text {for } i,\overline{i}\notin L\]
	If $1\notin L$, then $\Delta_L=F_L\in \langle F_J~|~1\notin J\rangle$. Therefore, $\overline{\Delta_L}=0$ in $\bz[\mathscr{Q}_{2n}]/\langle F_J~|~1\notin J\rangle$.

	The above implies that every generators $\overline{\Delta_P}\in \bz[\mathscr{Q}_{2n}]/\langle F_J~|~1\notin J\rangle$  can be written by $\overline{\Delta_L}$'s with $1\in L$ and $|L|=n+1$.
	
	We next assume that $I\subset \cv_{2n}$ with $1\notin I$ and $|I|=n$. Further, we assume that there is the unique pair $\{b,\overline{b}\}\not\subset I$. Let $I=\{i_1,\ldots,i_n\}$ and $\cv_{2n}\setminus I=\{1,b,\overline{b}, j_1,\ldots,j_{n-1}\}$. Then by Lemma \ref{relation3}, we have
	\[\Delta_{\{1,\overline{b}, j_1,\ldots,j_{n-1}\}}=(1-M_{i_1})\cdots(1-M_{i_n})-M_b\cdot\Delta_{\{1,b, j_1,\ldots,j_{n-1}\}}\]

	This shows that from the generators $\{\overline{\Delta_L}\}$ with $1\in L$ and $|L|=n+1$, if there is a vertex $\overline{b}\in L$ for some $b=2,\ldots, n+1$, we may replace $\{\overline{\Delta_L}\}$ by $\overline{\Delta_{(L\setminus\{\overline{b}\})\cup \{b\}}}$ by using $\overline{M_b}$.

		Therefore, all other generators of type  $\overline{\Delta_P}$, where $P\subset\cv_{2n}$ and $\{i,\overline{i}\}\not\subset P$ for all $1\leq i\leq {n+1}$, can be replaced by $\overline{\Delta_{\{1,\ldots,n+1\}}}$. 
		Moreover, by Lemma \ref{relation3} for $I=\{n+2,\ldots,2n+1\}\subset[2n+2]$, we have 
\[(1-M_{n+2})\cdots(1-M_{2n+1})=\Delta_{\{1,2,\ldots,n+1\}}+M_1\cdot\Delta_{\{2,\ldots,n+1,2n+2\}}\]
Consequently, since $\Delta_{\{2,\ldots,n+1,2n+2\}}\in \langle F_J~|~1\notin J\rangle$, we have the following  \[(\overline{1-M_{n+2}})\cdots(\overline{1-M_{2n+1}})=\overline{\Delta_{\{1,2,\ldots,n+1\}}}\in \bz[\mathscr{Q}_{2n}]/\langle F_J~|~1\notin J\rangle\]
	This concludes that every element in $\overline{{\bf D}}$ can be written in terms of the elements in $\overline{{\bf M}}$.
	
	Next, from \eqref{F_J}, we have $1-M_1=F_{\cv_{2n}\setminus\{1\}}$ i.e $\overline{M_{1}}=\overline{1}\in\bz[\mathscr{Q}_{2n}]/\langle F_J~|~1\notin J\rangle$.
	Therefore, together with Lemma \ref{relation2}, we have 
	\[\overline{1}\cdot\overline{ M_{\overline{1}}}=\overline{M_2}\cdot\overline{ {M_{\overline{2}}}}=\cdots=\overline{M_{n+1}}\cdot \overline{M_{\overline{n+1}}}=\overline{M_{n+1}}\cdot \overline{M_{n+2}}\in \bz[\mathscr{Q}_{2n}]/\langle F_J~|~1\notin J\rangle.\]
Consequently,  \[\overline{M_{\overline{1}}}=\overline{M_{n+1}}\cdot \overline{M_{n+2}},\text{ and } \overline{M_{\overline{k}}}=\overline{M_{k}^{-1}}\cdot \overline{M_{n+1}}\cdot \overline{M_{n+2}}~~ \text{ for }k=2,\ldots,n\]
 This implies that all other generators in \eqref{injproofgen1} can be reduced to the following
 \[\{\overline{M_i},~\overline{M_i^{-1}} ~|~2\leq i\leq n+2\}\]
	Hence, $p: \bz[M_2, M_2^{-1},\ldots,M_{n+2},M_{n+2}^{-1}]\to 	\bz[\mathscr{Q}_{2n}]/\langle F_J~|~1\notin J\rangle$ is surjective.

	Finally, we consider the homomorphisms
	\[\bz[M_2, M_2^{-1},\ldots,M_{n+2},M_{n+2}^{-1}]\xrightarrow{p}	\bz[\mathscr{Q}_{2n}]/\langle F_J~|~1\notin J\rangle\xrightarrow{\psi}R(T^{n+1})\]
where $\psi$ is induced from the following composition $\bz[\mathscr{Q}_{2n}]\xrightarrow{\phi}K(\mathscr{Q}_{2n})\to R(T^{n+1})$, and is defined by $f\mapsto \phi(f)(1)$ for $f\in\bz[\mathscr{Q}_{2n}]$. Notice that \[\psi\circ p(M_i)=y_{i-1}, ~\text{ and, }~\psi\circ p(M_i^{-1})=y_{i-1}^{-1} ~~\text{ for } i=2,\ldots,n+2.\]

Therefore, $\psi\circ p$ is an isomorphism, hence $p$ is injective. Consequently, $p$ is an isomorphism.

Hence, the lemma follows.
\end{proof}

\begin{coro}
	We have the following injective homomorphism :
	\begin{equation}\label{localization}
		\displaystyle K(\mathscr{Q}_{2n})\hookrightarrow\bigoplus_{v\in\cv_{2n}}R(T^{n+1})\simeq\bigoplus_{v\in\cv_{2n}}	\bz[\mathscr{Q}_{2n}]/\langle F_J~|~v\notin J\rangle\simeq{\bigoplus_{v\in\cv_{2n}}}\bz[M_i, M_i^{-1}~|~i\in I_v].\end{equation}
\end{coro}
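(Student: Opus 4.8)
The plan is to read the displayed chain as, from left to right, the tautological evaluation map followed by two applications of Lemma \ref{lemma5.4}, one for each vertex. First I would make the leftmost arrow explicit. By Definition \ref{kclass}, an element $f\in K(\mathscr{Q}_{2n})$ is literally a map $f:\cv_{2n}\to R(T^{n+1})$, so $K(\mathscr{Q}_{2n})$ is a subring of $Maps(\cv_{2n},R(T^{n+1}))$. Since $\cv_{2n}=[2n+2]$ is finite we have $Maps(\cv_{2n},R(T^{n+1}))=\bigoplus_{v\in\cv_{2n}}R(T^{n+1})$, and the arrow $K(\mathscr{Q}_{2n})\hookrightarrow\bigoplus_{v}R(T^{n+1})$ is just this inclusion of subrings, i.e.\ the evaluation (localization-at-fixed-points) homomorphism $f\mapsto(f(v))_{v\in\cv_{2n}}$. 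It is a ring homomorphism for the pointwise ring structure, and it is injective because a function is determined by its values.

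For the two isomorphisms I would invoke Lemma \ref{lemma5.4} vertex by vertex. For each fixed $v\in\cv_{2n}$ that lemma supplies ring isomorphisms
\[
R(T^{n+1})\;\simeq\;\bz[\mathscr{Q}_{2n}]/\langle F_J~|~v\notin J\rangle\;\simeq\;\bz[M_i,M_i^{-1}~|~i\in I_v],
\]
and taking the direct sum over all $v\in\cv_{2n}$ produces the remaining two $\simeq$'s in the display. Thus the corollary is an immediate consequence of Lemma \ref{lemma5.4} together with the elementary identification of $K$-classes as functions on the finite vertex set $\cv_{2n}$.

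The one point I would take care to record, since it is what makes the statement useful for the injectivity half of Theorem \ref{mainth}, is the compatibility of these identifications with $\phi$. Concretely, the isomorphism $\bz[\mathscr{Q}_{2n}]/\langle F_J~|~v\notin J\rangle\simeq R(T^{n+1})$ of Lemma \ref{lemma5.4} is realized by $g\mapsto\phi(g)(v)$ (the map $\psi$ appearing in its proof), so that the $v$-th component of the evaluation map on $K(\mathscr{Q}_{2n})$ is intertwined through $\phi$ with the quotient map $\bz[\mathscr{Q}_{2n}]\to\bz[\mathscr{Q}_{2n}]/\langle F_J~|~v\notin J\rangle$. Collecting these over all $v$ shows that the square relating $\phi$, the product of the quotient maps, and the evaluation map commutes. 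There is no genuine obstacle in the corollary itself: all the substance lives in Lemma \ref{lemma5.4}, and the only thing left to verify by hand is this commutativity, which is immediate from the defining formula for $\psi$.
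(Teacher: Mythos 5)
Your proposal is correct and follows the same route as the paper: the first inclusion is the tautological evaluation map coming from the definition of $K(\mathscr{Q}_{2n})$ as a subring of $Maps(\cv_{2n},R(T^{n+1}))$, and the two isomorphisms are Lemma \ref{lemma5.4} applied vertex by vertex. Your extra remark on compatibility with $\phi$ is a helpful (and accurate) elaboration of how the corollary is used in Lemma \ref{injectivitymainth}, but it is not needed for the corollary itself.
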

\begin{proof}
The first inclusion directly follows from the definition of equivariant $K$-theory of GKM graphs (see, \cite{kr, gsz13}), while the later part follows from Lemma \ref{lemma5.4}.
\end{proof}

\begin{lema}\label{injectivitymainth}
	The homomorphism $\phi: \bz[\mathscr{Q}_{2n}]\to K(\mathscr{Q}_{2n})$ is injective.
\end{lema}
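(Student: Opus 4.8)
The plan is to prove $\ker\phi=0$, and I would first reduce this to a statement purely about the coefficients in the canonical form \eqref{formatkclass}. By Lemma \ref{lemma5.4}, for each vertex $v\in\cv_{2n}$ the composite $\bz[\mathscr{Q}_{2n}]\to\bz[\mathscr{Q}_{2n}]/\langle F_J\mid v\notin J\rangle\cong R(T^{n+1})$ is exactly the evaluation $g\mapsto\phi(g)(v)$; since the second arrow is an isomorphism, the kernel of $g\mapsto\phi(g)(v)$ is precisely $\langle F_J\mid v\notin J\rangle$. Because a $K$-class is determined by its values on vertices, which is the injectivity of the localization \eqref{localization}, this gives $\ker\phi=\bigcap_{v\in\cv_{2n}}\langle F_J\mid v\notin J\rangle$. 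Hence it will suffice to show that any $g\in\bz[\mathscr{Q}_{2n}]$ with $\phi(g)(v)=0$ for all $v$ is itself $0$.

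So I would fix such a $g$ and, using Remark \ref{surjremark1}, choose a representative $f\in\bz[\mathbf{M},\mathbf{D}]$ in the canonical form \eqref{formatkclass}, namely $f=\sum_{m=1}^{n+1}h_m(1-M_1)\cdots(1-M_{m-1})+\sum_{j=2}^{n+2}h_{n+j}\Delta_{\{n+j,\dots,2n+2\}}$ with all $h_i\in R(T^{n+1})$. Since $\phi(g)=\phi(f)$, the whole problem reduces to showing every $h_i=0$: once this is known $f=0$, hence $g=f+\mathfrak{I}=0$. The mechanism I would use is a lower-triangular (staircase) evaluation. First I would evaluate on the vertices $1,2,\dots,n+1$ in order. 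At $v=k\le n+1$ every $\Delta_{\{n+j,\dots,2n+2\}}$ vanishes, as its support lies in $\{n+2,\dots,2n+2\}$, and in the $M$-part the factor $(1-M_k)(k)=0$ annihilates all summands with $m>k$, so $\phi(f)(k)=\sum_{m\le k}h_m\prod_{i<m}\big(1-f(i)f(k)^{-1}\big)$. Feeding in $h_1=\dots=h_{k-1}=0$ from the previous steps leaves $h_k\prod_{i<k}\big(1-f(i)f(k)^{-1}\big)=0$; each factor $1-f(i)f(k)^{-1}$ is a nonzero element of the integral domain $R(T^{n+1})$, so the product is a nonzerodivisor and $h_k=0$. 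Inductively this yields $h_1=\dots=h_{n+1}=0$.

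With the $M$-part eliminated, I would repeat the same bookkeeping on the vertices $n+2,\dots,2n+2$, exploiting the nested supports of the Thom classes: $\Delta_{\{n+j,\dots,2n+2\}}(n+j')\ne0$ only when $j'\ge j$. Evaluating at $n+2$ isolates $h_{n+2}\,\Delta_{\{n+2,\dots,2n+2\}}(n+2)$, and since $\Delta_{\{n+2,\dots,2n+2\}}(n+2)$ is again a nonzero product of terms $1-f(k)f(n+2)^{-1}$ we obtain $h_{n+2}=0$; proceeding through $n+3,\dots,2n+2$ and using the previously established vanishings, each evaluation isolates a single $h_{n+j}$ times a nonzerodivisor, forcing $h_{n+j}=0$. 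Thus all $h_i$ vanish, $f=0$, and $g=0$, so $\phi$ is injective. The main point to get right — really the only substantive work — is verifying that this evaluation is genuinely triangular: that the supports of the $\Delta_P$'s are nested as claimed, that $(1-M_k)(k)=0$ removes exactly the higher $M$-summands, and that at each stage precisely one new coefficient survives multiplied by a nonzero element of $R(T^{n+1})$. I do not expect a deeper obstruction; it is careful bookkeeping resting on \eqref{delta_k}, \eqref{F_J}, and the domain property of $R(T^{n+1})$.
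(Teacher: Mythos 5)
Your proposal is correct and follows essentially the same route as the paper: reduce to the composite with the localization map \eqref{localization}, write the class in the canonical form \eqref{formatkclass} via Remark \ref{surjremark1}, and run the triangular evaluation over the vertices $1,\dots,n+1$ and then $n+2,\dots,2n+2$, killing one coefficient $h_i$ at each step using the integral-domain property of $R(T^{n+1})$. The only cosmetic difference is your opening identification of $\ker\phi$ with $\bigcap_v\langle F_J\mid v\notin J\rangle$, which the paper does not state explicitly but which follows from Lemma \ref{lemma5.4} and changes nothing in the argument.
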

\begin{proof}
This is enough to show that the following  map $\Phi$ is injective :
\[\Phi:\displaystyle  \bz[\mathscr{Q}_{2n}]\xrightarrow{\phi}K(\mathscr{Q}_{2n})\hookrightarrow\bigoplus_{v\in\cv_{2n}}R(T^{n+1})\simeq\bigoplus_{v\in\cv_{2n}}\bz[M_i, M_i^{-1}~|~i\in I_v].\] 

Let $r_u:\displaystyle \bigoplus_{v\in\cv_{2n}}\bz[M_i, M_i^{-1}~|~i\in I_v]\to \bz[M_i, M_i^{-1}~|~i\in I_u]$ be the restriction map at $u\in\cv_{2n}$. For any $f\in \bz[\mathscr{Q}_{2n}]$, let $f(u)$ denote the image of  $f$ by $r_u\circ \Phi$. 

Assume that $\Phi(f)=0$ for an element $f\in \bz[\mathscr{Q}_{2n}]$. Therefore, \[r_v\circ \Phi(f)=f(v)=0 \in \bz[M_i, M_i^{-1}~|~i\in I_v]~~~\text{for all } v\in\cv_{2n}.\]

Note that  $f\in \bz[\mathscr{Q}_{2n}]$ can be written as follows (see,  Remark \ref{surjremark1})
\begin{equation}\label{inj1}
	\begin{split}
		f=h_1+h_2(1-M_1)&+\cdots+h_{n+1}(1-M_1)\cdots(1-M_n)\\&+h_{n+2}\Delta_{\{n+2,\ldots,2n+2\}}+\cdots+h_{2n+1}\Delta_{\{2n+1,2n+2\}}+h_{2n+2}\Delta_{\{2n+2\}}+\mathfrak{I}\\
	\end{split}
\end{equation}
{where $h_i\in R(T^{n+1})\subset\bz[{\bf M}, ~{\bf D}]$ for all $i=1,\ldots, 2n+2$.}

Note that, for all $i=1,\ldots,2n+2$, $\phi(h_i)\in \bz[y_1,y_1^{-1},\ldots,y_{n+1},y_{n+1}^{-1}]$ (see, \eqref{algebramap}). This implies that if for some vertex $u\in\cv_{2n}$, we have $h_i(u)=0$, then $h_i\equiv 0$.

{\bf Claim 1:} $h_i=0$ for all $1\leq i\leq n+1$.\\
Note that 
\begin{equation}\label{inj2}
	\Delta_{\{n+2,\ldots,2n+2\}}(i)=\cdots=\Delta_{\{2n+2\}}(i)=0 ~~\text{for all}~~ i=1,\ldots,n+1,
\end{equation}
and $({1}-M_1)(1)=0$. Therefore, from \eqref{inj1}, we have
\[0=f(1)=h_1(1)+ 0+\cdots+0.\]
 Hence, $h_1=0$.
Further, by using $h_1=0$ in \eqref{inj1} and, for $({1}-M_2)(2)=0$, we have 
\[0=f(2)=h_2({1}-M_1)(2)+ 0+\cdots+0 .\] 
Notice that  $h_2(2), (1-M_1)(2)\in \bz[M_i, M_i^{-1}~|~i\in I_2]$ (see, \eqref{localization}) and $({1}-M_1)(2)\not=0$. Hence $h_2(2)=0$ since $\bz[M_i, M_i^{-1}~|~i\in I_2]$ is an integral domain. Therefore, $h_2=0$.

By following the similar arguements for $i=3,\ldots,n+1$, one can have \[h_3=\cdots=h_{n+1}=0.\]
Therefore,
 \begin{equation}\label{inj3}
	\begin{split}
		f=h_{n+2}\Delta_{\{n+2,\ldots,2n+2\}}+\cdots+h_{2n+1}\Delta_{\{2n+1,2n+2\}}+h_{2n+2}\Delta_{\{2n+2\}}+\mathfrak{I}
	\end{split}
\end{equation}

{\bf Claim 2:} $h_i=0$ for all $n+2\leq i\leq 2n+2$.

From \eqref{inj3}, we have the following for the vertex $n+2\in\cv_{2n}$.
\[	0=f(n+2)=h_{n+2}\Delta_{\{n+2,\ldots,2n+2\}}(n+2)+h_{n+3}\Delta_{\{n+3,\ldots,2n+2\}}(n+2)+\cdots+h_{2n+2}\Delta_{\{2n+2\}}(n+2).\]
Since
\[	\begin{split}
\Delta_{\{n+3,\ldots,2n+2\}}&(n+2)=\cdots=\Delta_{\{2n+2\}}(n+2)=0,\\
	&\Delta_{\{n+2,\ldots,2n+2\}}(n+2)\not=0,    
		\end{split}\]
 we have $h_{n+2}=0$, by the similar reason as above. Iterating the similar arguements for other vertices $n+3,\ldots,2n+2\in\cv_{2n}$, we have  $h_i=0$ for all $n+3\leq i\leq 2n+2$.

Therefore $f=0$, which shows the injectivity of $\Phi$.

Hence, the lemma follows. 
\end{proof}

{\it Proof of Theorem \ref{mainth}. }
The result \eqref{mainth1} follows from Lemma \ref{surjmainth} and Lemma \ref{injectivitymainth}. Further, since $Q_{{{2n}}}$ is an equivariantly formal GKM manifold under the effective $T^{n+1}$-action,  \eqref{mainth2} follows (see \eqref{gkmkth}). \hfill\qedsymbol

{\bf Acknowledgement :} The author is deeply grateful to Prof. Shintar{\^o} Kuroki and Prof. Vikraman Uma for several valuable discussions and for their encouragement during the preparation of the manuscript. The author would also like to thank Prof. Allen Knutson for helpful email exchanges. The author is grateful to the anonymous referees for their careful reading of the manuscript and for their comments and suggestions. The author is also grateful to the Indian Institute of Technology, Madras, for a PhD fellowship.

\end{document}